\numberwithin{equation}{section}
\newcommand{\Ad}{\ensuremath{{\mbox{\rm{Ad}}}}}
\newcommand{\e}{{\mbox{\rm e}}}
\newcommand{\mb}[1]{{\mbox{\boldmath{$#1$}}}}
\newcommand{\mc}[1]{{\mathcal{#1}}}
\newcommand{\got}[1]{{\mathfrak{#1}}}
\newcommand{\db}[1]{{\mathbb{#1}}}
\newcommand{\pa}{\partial}
\newcommand{\R}{\ensuremath{\mathbb{R}}}
\newcommand{\C}{\ensuremath{\mathbb{C}}}
\newcommand{\N}{\ensuremath{\mathbb{N}}}
\newcommand{\g}{\ensuremath{\got{g}}}
\newcommand{\m}{\ensuremath{\got{m}}}
\newtheorem{Remark}{Remark}
\newtheorem{Theorem}{Theorem}
\newtheorem{Proposition}{Proposition}
\newtheorem{lemma}{Lemma}
\newtheorem{Comment}{Comment}
\theoremstyle{definition}
\newcommand{\SL}{\ensuremath{{\mbox{\rm{SL}}(2,\R)}}}
\newcommand{\Ka}{K\"ahler}
\newcommand{\mr}[1]{{\mathrm{#1}}}
\def\ii{\operatorname{i}}
\newcommand{\dd}{\operatorname{d}}
\newcommand{\h}{\ensuremath{\got{h}}}
\renewcommand{\Re}{\operatorname{Re}}
\renewcommand{\Im}{\operatorname{Im}}
\begin{document}

\title{Remarks on the geometry of the  extended Siegel--Jacobi upper half-plane}
  \author{Elena Mirela   Babalic}
\address[Elena Mirela  Babalic]{National
 Institute for Physics and Nuclear Engineering\\
         Department of Theoretical Physics\\
         PO BOX MG-6, Bucharest-Magurele, Romania}

\email{mbabalic@theory.nipne.ro}

\author{Stefan  Berceanu}
\address[Stefan  Berceanu]{National
 Institute for Physics and Nuclear Engineering\\
         Department of Theoretical Physics\\
         PO BOX MG-6, Bucharest-Magurele, Romania}
       \email{Berceanu@theory.nipne.ro}
     
       \begin{abstract}
  The  real Jacobi group $G^J_1(\mathbb{R})={\rm
    SL}(2,\mathbb{R})\ltimes {\rm H}_1$, where  ${\rm H}_1$ denotes
  the 3-dimensional Heisenberg group, 
  is parametrized by the  $S$-coordinates
  $(x,y,\theta,p,q,\kappa)$. We show that
  the  parameter  $\eta$ that  appears passing from  Perelomov's un-normalized  coherent
  state vector  based on the Siegel--Jacobi disk $\mathcal{D}^J_1$ to the
  normalized one  
  is  $\eta=q+\rm{i} p$. 
 The two-parameter invariant metric on the Siegel--Jacobi upper
 half-plane
 $\mathcal{X}^J_1=\frac{G^J_1(\R)}{\rm{SO}(2)\times\mathbb{R}}$  is expressed in the variables
  $(x,y,\rm{Re}~\eta,\rm{Im}~\eta)$.
  It is proved that 
   the five dimensional manifold
   $\tilde{\mathcal{X}}^J_1=\frac{G^J_1(\R)}{\rm{SO}(2)}\approx
   \mathcal{X}^J_1\times \mathbb{R}$, called
   extended  Siegel--Jacobi upper half-plane, is a reductive,
   non-symmetric, non-naturally 
  reductive manifold with respect to the three-parameter metric invariant to
  the action of  $G^J_1(\mathbb{R})$, and  its
  geodesic vectors are determined.

\end{abstract}

\subjclass{32F45,53C55,53C30,81R30}
\keywords{Jacobi group, invariant metric,  Siegel--Jacobi upper
  half-plane,  extended
  Siegel--Jacobi upper half-plane,
   naturally reductive manifold, g. o. space, geodesic vector, coherent states}
 \maketitle

\tableofcontents
\section{Introduction}

The Jacobi group is defined as the semi-direct product of the
Heisenberg group and the symplectic group of appropriate dimension. The Jacobi group is intensively studied in Mathematics, Theoretical
and Mathematical Physics
\cite{jac1,SB15, SB19b,BERC08B,gem,bs,ez}, \cite{Y02}-\cite{Y10}.  We have
studied the  Jacobi group  $G^J_n:=\mr{H}_n\rtimes{\rm Sp}(n,\R)_{\C}$,
where  $\rm{H}_n$ denotes the $(2n+1)$-dimensional Heisenberg group
and ${\rm Sp}(n,\R)_{\C}:= {\rm Sp}(n,\C)\cap {\rm U}(n,n)$
\cite{sbj,nou}.

The real Jacobi group of degree $n$ is defined as $G^J_n(\R):={\rm
  Sp}(n,\R)\ltimes \mr{H}_n(\R)$, where  ${\rm Sp}(n,\R)_{\C}$ and $G^J_n$ are isomorphic to~${\rm Sp}(n,\R)$
and~$G^J_n(\R)$ respectively as real Lie groups, see
\cite[Proposition 2]{FC}, \cite{gem}, \cite{Y07}. To simplify the
notation we will denote in the following  $\mr{H}_n(\R)$ also with   $\mr{H}_n$.

The Siegel-Jacobi  ball  $\mc{D}^J_n$ is a
$G^J_n$-homogenous manifold, whose points are in
$\C^n\times\mc{D}_n$ \cite{sbj},  where  $\mc{D}_n\approx  \operatorname{Sp}(n, \R
)_{\C}/\operatorname{U}(n)$ denotes the Siegel (open)  ball  of degree $n$ \cite{helg}. 

The Jacobi group   is a unimodular, non-reductive, algebraic group of
Harish-Chandra type  \cite{gem,LEE03}, \cite{SA71}-\cite{SA80}, and 
 $\mc{D}^J_n$  is a  reductive,
non-symmetric manifold associated to the Jacobi group $G^J_n$ by the
generalized Harish-Chandra embedding \cite{SB15}, \cite{SB19b}. 
The holomorphic irreducible unitary representations of $G^J_n$ 
based on $\mc{D}^J_n$    constructed in 
 \cite{gem,bs}, \cite{tak}-\cite{TA99} 
are   relevant   to important areas of mathematics such as Jacobi forms, automorphic forms,
L-functions and modular forms, spherical functions, the ring of
invariant differential operators, 
theta functions, Hecke operators, Shimura varieties and  Kuga fiber varieties.

The Jacobi group was investigated by mathematicians
 \cite{lis2,lis,neeb96,neeb} in the context
of coherent states (CS) \cite{mosc,mv,perG}. 
Some CS systems based on $\mc{D}^J_n$  have been
considered   in the framework of quantum mechanics, geometric quantization,
dequantization, quantum optics, squeezed states, quantum
teleportation,  quantum tomography,  nuclear structure,  signal  processing, 
Vlasov kinetic equation  \cite{chi,gbt,KRSAR82,marmo,nish,Q90,SH03}.

The starting point in  Perelomov's approach to CS is  the triplet
$(G,\pi,\got{H})$, where $\pi$ is a unitary, irreducible representation
of the Lie
group $G$ on a separable complex  Hilbert space $\got{H}$  \cite{perG}. 

Two types of CS-vectors belonging to  $\got{H}$ are locally defined on
$M=G/H$:  the normalized (un-normalized) CS-vector
$\underline{e}_x$ (respectively, $e_z$) 
 \begin{equation}\label{2.1}
\underline{e}_x=\exp(\sum_{\phi\in\Delta^+}x_{\phi}{\mb{X}}^+_{\phi}-{\bar{x}}_{\phi}{\mb{X}}^-_{\phi})e_0,
\quad e_z=\exp(\sum_{\phi\in\Delta^+}z_{\phi}{\mb{X}}^+_{\phi})e_0,
\end{equation}
where $e_0$ is the extremal weight vector of the representation $\pi$,
$\Delta^+$ is the set of positive roots
of the Lie algebra $\got{g}$, and 
$X^+_{\phi}$   ($X^-_{\phi}$) 
 are the positive (respectively, negative) generators. For  $X\in
 \got{g}$  we denoted in \eqref{2.1} $\mb{X}:=\dd\pi(X)$  \cite{SB03,SB14,perG}.

In the standard procedure of CS,
the  $G$-invariant \Ka~ two-form  on a $2n$-dimensional homogenous 
manifold $M=G/H$ is obtained from the \Ka~ potential $f$ via the recipe
\begin{subequations}\label{KALP}
  \begin{align}-\ii\omega_M & =\pa\bar{\pa}f, ~f(z,\bar{z})=\log
  K(z,\bar{z}), ~K(z,\bar{z}):=(e_{{z}},e_{{z}}),\label{KALP1}\\
\omega_M(z,\bar{z}) & =\ii \sum_{\alpha,\beta}h_{\alpha\bar{\beta}}\dd
  z_{\alpha}\wedge \dd \bar{z}_{\beta},~
  h_{\alpha\bar{\beta}}=\frac{\pa^2 f}{\pa z_{\alpha}\pa
      \bar{z}_{\beta}},~
                      h_{\alpha\bar{\beta}}=\bar{h}_{\beta\bar{\alpha}},~\alpha,\beta=1,\dots,n,\label{KALP2}
  \end{align}
  \end{subequations}
where   $K(z,\bar{z})$
is  the scalar product of two  un-normalized Perelomov's  CS-vectors $e_{{z}}$ at
$z\in M$  \cite{sbj,SB15, perG}.

It is well known, see \cite[Theorem
4.17]{ball}, \cite[Proposition 20]{SB19a}, \cite[(6), p 156]{kn}, 
that the condition $\dd \omega=0$ for a Hermitian
manifold to have a \Ka~ structure is equivalent with the conditions
\begin{equation}\label{EQK}
  \frac{\pa h_{\alpha\bar{\beta}}}{\pa z_{\gamma}}= \frac{\pa
      h_{\gamma\bar{\beta}}}{\pa z_{\alpha}}, \quad\text{or}\quad \frac{\pa h_{\alpha\bar{\beta}}}{\pa z_{\gamma}}= \frac{\pa
      h_{\alpha\bar{\gamma}}}{\pa z_{\bar{\beta}}},\quad\alpha,\beta,\gamma =1,\dots,n.
 \end{equation}

   In accord with  \cite[p 42 ]{ball}, \cite[p 28]{green},
   \cite[Appendix B]{SB19a},     the Riemannian metric associated with the Hermitian  metric
    on the manifold $M$ in local coordinates is 
    \begin{equation}\label{asm}\dd
    s^2_{M}(z,\bar{z})=\sum_{\alpha,\beta}h_{\alpha\bar{\beta}}\dd
    z_{\alpha}\otimes\dd \bar{z}_{\beta}.\end{equation}

Using the CS approach, in \cite{jac1} we have determined the \Ka~
invariant two-form $\omega_{\mc{D}^J_1}(w,z)$ on the Siegel--Jacobi disk
$\mc{D}^J_1=\frac{G^J_1}{\rm{U}(1)\times \R}\approx \mc{D}_1\times\C$,
  where  the Siegel disk $\mc{D}_1$ is realized as $\{ w\in\C |
  ~|w|<1\}$. In \cite{jac1,BER77,FC}  we applied the partial Cayley transform
  to  $\omega_{\mc{D}^J_1}(w,z)$ and we obtained
the \Ka~ invariant two-form on the Siegel--Jacobi upper half-plane 
$\mathcal{X}^J_1=\frac{G^J_1(\mathbb{R})}{{\rm SO}(2)
  \times\mathbb{R}} \approx\mathcal{X}_1 \times\mathbb{R}^2$, firstly
determined by  \Ka~and   Berndt \cite{mlad,bern84,bern,bs,cal3,cal}, where
$\mathcal{X}_1$ denotes the Siegel upper half-plane, realized as
$\{ v\in \C|
  \Im v>0\}$. The construction has been generalized in \cite{sbj,nou} for the
Jacobi group of degree $n$. 
In~\cite{SB15} we have underlined that the metric associated to the K\"ahler
two-form on the Siegel--Jacobi ball $\mc{D}^J_n=\frac{G^J_n}{\rm{U}(n)\times\R}$ is a  balanced metric \cite{arr,don,alo}.

In \cite{SB19b}    we introduced a  five-dimensional  manifold $\tilde{\mathcal{X}}^J_1=
\frac{G^J_1(\mathbb{R})}{{\rm
    SO}(2)}\approx\mathcal{X}_1\times\mathbb{R}^3$, called extended
Siegel--Jacobi upper half-plane. Because in Berezin's approach to CS
on $M=G/H$  traditionally  are considered $G$-homogenous \Ka~ metrics
on $M$, we 
were intersted in determining the invariant metrics as well   on
$\tilde{\mathcal{X}}^J_1$, 
so we had  to abandon  Berezin's  procedure to obtain  balanced 
 metric via the CS approach based on homogenous \Ka~
manifolds \cite{ber73}-\cite{berezin} and we applied in  \cite{SB19b}
 Cartan's moving frame method \cite{cart4,cart5,ev}, which allows to determine
 invariant metrics on  odd or even dimensional manifolds.

Mathematicians  consider the real Jacobi group $G^J_1(\R)$
as subgroup of  $\text{Sp}(2,\R)$. We  followed  this approach in
\cite{SB19b,BERC08B}, while  in 
\cite{jac1}-\cite{SB14}  the Jacobi group was
investigated via  the construction of Perelomov's CS.
We adopt  the notation from  \cite{bs,ez} for the real  Jacobi group   $G^J_1(\R)$,  
realized as  submatrices of $\text{Sp}(2,\R)$ of the form
\begin{equation}\label{SP2R}
g=\left(\begin{array}{cccc} a& 0&b &   q\\
\lambda &1& \mu & \kappa\\c & 0& d &  -p\\
         0& 0& 0& 1\end{array}\right),~ M=
    \left(\begin{array}{cc}a&b\\c&d\end{array}\right),~ \det M
   =1, \end{equation}
where
\begin{equation}\label{DEFY}Y:=(p,q)=XM^{-1}=(\lambda,\mu) \left(\begin{array}{cc}a&b\\c&d\end{array}\right)^{-1}=(\lambda d-\mu
  c,-\lambda b+\mu a)\end{equation} 
is related to the Heisenberg group $\rm{H}_1$ described by
$(\lambda,\mu,\kappa)$.  For  coordinatization of  the  real Jacobi
group  we adopt  the so called  $S$-coordinates
  $(x,y,\theta,p,q,\kappa)$  \cite{bs}.

  The present investigation is a continuation of \cite{SB19b}, where we have obtained invariant metrics for several
 homogeneous
 manifolds associated with  the real Jacobi group. In particular, we
 have determined the 2 (3)--parameter invariant metric on 
 $\mc{X}^J_1$,  (respectively, $\tilde{\mc{X}}^J_1$). We proved  in \cite{SB19b} that
   $\mc{X}^J_1$ is a non-symmetric, not naturally
 reductive space  with
 respect to  the balanced metric.

Below we motivate our interest for naturally reductive spaces.

We  denoted  by $FC$ \cite{nou} the change of variables $x \rightarrow  z$ in
formula \eqref{2.1} such that
\begin{equation}\label{2.2}
\underline{e}_x =  (e_z,e_z)^{-\frac{1}{2}}e_z; \quad z = FC(x).
\end{equation}
In  Remark 3 of the paper \cite{sbl},  devoted to coherent states with
support on Hermitian symmetric spaces, we observed that
\[
{\text{   For~symmetric manifolds the  FC-transform   gives
  geodesics\qquad (A)}}
\]
i.e. {\it for symmetric spaces} $M=G/H$, {\it the relation} $\exp(tz(x))=\exp (tFC(x))$ {\it gives
geodesics through   the identity of}
$M$. Assertion  (A) was
verified by direct calculation for the complex  Grassmann manifold
$G_n(\C^{m+n})= \frac{{\mr{SU}}(n+m)}{{\mr S}({\mr U}(n)\times {\mr
    U}(m))}$ and its noncompact dual
$\frac{{\mr{SU}}(n,m)}{{\mr S}({\mr U}(n)\times {\mr U}(m))}$
\cite{ber97A}.  Looking for  a geometric  mening of the
phase  of the scalar product of two un-normalized Perelomov's
CS-vectors \cite{ber97,SBS}, in \cite[Remark
1]{ber97} we showed that   assertion (A) is true for a larger class of
manifolds verifying a  technical condition which includes the naturally
reductive spaces, a natural generalization of symmetric spaces.  

So we have the following sequence of  space  inclusions 
\[
 { \text{Hermitian symmetric}}
\subset{\text{symmetric}}\subset{\text{naturally reductive}}
\subset{\text{g. o.}} 
\]
Indeed, in  \cite{BERA,BERB} we observed that the Hermitian
symmetric spaces are in particular naturally reductive spaces. Let 
$M=G/H$ be  a reductive Riemannian homogenous space \cite{nomizu}. We  have the direct 
sum of non-intersecting  vector spaces 
$\got{g}=\got{h}\oplus\got{m}$, 
and  geodesics on naturally reductive manifolds $M$ are obtained just
by taking the  exponential of
$\got{m}$ \cite{nomizu}.   We recall that the  g. o. spaces  are  Riemannian
homogeneous spaces $(M,g)$  with origin $p=\{H\}$ where all the
geodesics are  orbits of one parameter group of isometries $\exp tZ$,
$Z\in\got{m}$.  $X\in \got{g}\setminus \{0\}$ is a geodesic vector
if the curve  $\gamma(t)=\exp(tX)(p)$ is geodesic with respect to the
Riemannian connection \cite{kwv}.

In    \cite[Lemma 3]{SB05}, \cite[Lemma 6.11 and  Remark
6.12]{jac1} we proved that $(\C,\mc{D}_1)\ni (z,w)=FC(\eta,w)$, but in
\cite[Proposition 5.8]{SB19b} we showed  that $\mc{X}^J_1$ is not
naturally reductive with respect to the balanced metric.
Consequently,  the FC-transform on $\mc{D}^J_1$ does not generate  geodesics as in (A), see also \cite{GAB}.

The paper is laid out as follows. In  Section  \ref{UHF} we make  several changes of
coordinates in  the \Ka~ two-form $\omega_{\mc{D}^J_1}$ which are used in Section \ref{INV}. Section \ref{HS1}
(\ref{SLSR}), extracted from \cite{SB19b}, gives  information
on the embedding of $\rm{H}_1$ 
(respectively $\rm{SL}(2,\R)$)  in $\rm{Sp}(2,\R)$. Minimal
information on the real Jacobi group as a subgroup of $\rm{Sp}(2,\R)$ is
given  in Section \ref{JG1}. Lemma \ref{LEMN}, an enlarged and
improved version of  \cite[Lemma 5.1]{SB19b},  establishes the
action of the real Jacobi group on some of its   homogenous
spaces.  In  Proposition \ref{4pr}, an improved version of \cite[Proposition
5.2]{SB19b}, the fundamental vector fields (FVF) on homogenous
spaces associated to the Jacobi group are
calculated.  Comment
\ref{CM1},  Proposition   \ref{Pr5} and  Proposition \ref{BIGTH} enrich the
corresponding assertions  in \cite{SB19b}.  In the  last section,
where geometric properties of the five dimensional manifold 
$\tilde{\mc{X}}^J_1$ are investigated, also   are presented very sketchy the
definitions of the mathematical objects
investigated here, see
full details in 
 \cite{SB19b} and the 
extended version  \cite{SB19a}. The paper is concluded  with 
Comment \ref{CM2}.

 The new  results of the  present paper are contained in: Remark
 \ref{STR},  where we  emphasize that the S-variables $p,q\in\R$ have a ``CS-meaning''  given by
 the simple relation  $\eta =
 q+\ii p$;   item d) in
 Lemma \ref{LEMN};  item g) in Proposition \ref{4pr}; equation
  \eqref{METRS4} in Proposition \ref{Pr4}; Proposition \ref{Pr5}, which shows that the
 FVF determined in Proposition \ref{4pr} f) are Killing vectors for
 the invariant metric on  $\tilde{\mc{X}}^J_1$; Proposition \ref{BIGTH}, which
 shows that the FVF determined  in  Proposition \ref{4pr} g) are
 Killing vectors for
  the invariant metric of $G^J_1(\R)$.
  In  Theorem \ref{PRLST}, which summarises
   the main results of the present paper, we  show that
  $\tilde{\mc{X}}^J_1$ is a non-symmetric, non-naturally reductive
space with respect to the  three-parameter invariant metric. In the same
theorem  we 
determine also  the geodesic vectors on $\tilde{\mc{X}}^J_1$.

\textbf{Notation}

We denote by $\mathbb{R}$, $\mathbb{C}$, $\mathbb{Z}$ and $\mathbb{N}$ 
 the field of real numbers, the field of complex numbers,
the ring of integers,   and the set of non-negative integers, respectively. We denote the imaginary unit
$\sqrt{-1}$ by~$\ii$, the real and imaginary parts of a complex
number $z\in\C$ by $\Re z$ and $\Im z$ respectively, and the complex 
conjugate of $z$ by $\bar{z}$. We denote by~$|M|$ or by $\det(M)$ the determinant of matrix~$M$. $M(n,m,\db{F})$ denotes the set
of $n\times m$ matrices with entries in the field $\db{F}$. We denote by $M(n,\db{F})$ the set $M(n,n,\db{F})$. If $A\in
M(n,\db{F})$, then
$A^t$ denotes the transpose of~$A$. 
 We denote by ${\dd }$ the differential. 
We use Einstein convention i.e.  repeated indices are
implicitly summed over. The scalar product of vectors in the Hilbert
space  $\got{H}$ is
denoted $(\cdot,\cdot)$.  The set of vector fields (1-forms) is denoted
by $\got{D}^1$ (respectively $\got{D}_1$). If $\lambda\in\got{D}_1$ and $L\in\got{D}^1$, then
$\langle \lambda\,|\,L\rangle $ denotes their pairing. If~$X_i$,
$i=1,\dots,n$ are vectors in a  vector space $V$
over the field $\db{F}$, then $\langle X_1,X_2,\dots,X_n\rangle_{
\db{F}}$ denotes their span over $\db{F}$.  If we
denote with Roman  capital letteres the Lie  groups, then their
associated Lie algebras are denoted with the corresponding lowercase letter.

\section{Invariant \Ka~two-forms on the Siegel--Jacobi upper half-plane}\label{UHF}
The next proposition is an improved and enlarged version of
\cite[Proposition 2.1]{SB19b}. Below $(w,z)\in  ( \mc{D}_1,\C)$,
$(v,u)\in (\mc{X}_1,\C)$, and 
the parameters $k$ and $\nu$ come from representation theory of the
Jacobi group: $k$ indexes the positive discrete series of ${\rm
  SU}(1,1)$, $2k\in\N$, while $\nu>0$ indexes the representations of
the Heisenberg group \cite{jac1}.

\begin{Proposition}\label{PRFC}
a) Let us consider the \Ka~ two-form 
\begin{equation}\label{kk1}
  -\ii \omega_{\mc{D}^J_1}
(w,z)\!=\!\frac{2k}{P^2}\dd w\wedge\dd
  \bar{w}+\nu \frac{A\wedge\bar{A}}{P},~P:=1-|w|^2,~A=A(w,z):=\dd
  z+\bar{\eta}\dd w,
\end{equation}
$G^J_0$-invariant to the action on the Siegel--Jacobi disk $\mc{D}^J_1$
\begin{equation}\label{22}
  \emph{\text{SU}}(1,1)\times\C\ni(\left(\begin{array}{cc}\mc{P}&\mc{Q}\\\bar{\mc{Q}}&\bar{\mc{P}}\end{array}\right),\alpha)\times
(w,z)=(\frac{\mc{P}w+\mc{Q}}{\bar{\mc{Q}}w+\bar{\mc{P}}},\frac{z+\alpha-\bar{\alpha}w}{\bar{\mc{Q}}w+\bar{\mc{P}}}).\end{equation}

We have the change of variables $(w,z)\rightarrow (w,\eta)$
\begin{gather}\label{E32}
{\rm FC}\colon \
 z=\eta-w\bar{\eta},\qquad {\rm FC}^{-1}\colon \
 \eta=\frac{z+\bar{z}w}{P},
\end{gather}
and
\begin{equation}\label{E32a}
  {\rm FC}\colon \ A(w,z)\rightarrow {\rm d} \eta -w{\rm d}
  \bar{\eta}.
\end{equation}
The complex  two-form
\begin{equation}\label{mind}
  \omega_{\mc{D}^J_1}(w,\eta):=FC^*(\omega_{\mc{D}^J_1}(w,z))
\end{equation}
is not a \Ka~two-form.

The symplectic  form corresponding to  the  FC-transform applied to K\"ahler two-form \eqref{kk1} is invariant to the action
$(g,\alpha)\times (w ,\eta)= (w_1,\eta_1)$ of $G^J_0$ on
$\C\times\mc{D}_1$
\begin{equation}\label{etaA}\eta_1=\mc{P}(\eta+\alpha)+\mc{Q}
  (\bar{\eta}+\bar{\alpha}),\end{equation}
where $\mc{P},\mc{Q}$ appear in \eqref{22}.

b) Using the partial Cayley transform 
\begin{subequations}\label{210}
\begin{align}
\Phi^{-1}:& ~v=\ii \frac{1+w}{1-w},~~u=\frac{z}{1-w}, ~~w,z\in\C,~ |w|<1;\\\
\Phi: & ~w=\frac{v-\ii}{v+\ii},~~z=2\ii
        \frac{u}{v+\ii},~~v,u\in\C,~\Im v>0,
\end{align}
\end{subequations}
we obtain
\[ A\left(\frac{v - \ii}{v+ \ii},\frac{2\ii
      u}{v + \ii}\right)=\frac{2\ii}{v+\ii}B(v,u),\]
where \begin{equation}\label{BFR2}
  B(v,u) := {\rm d} u - \frac{u - \bar{u}}{ v- \bar{v}}{\rm d} v.\end{equation}
The \Ka~ two-form of Berndt--\Ka
\begin{equation}
- \ii \omega_{\mc{X}^J_1}(v,u) = -\frac{2k}{(\bar{v} - v)^2} \dd
v\wedge \dd\bar{v}+ \frac{2\nu}{\ii(\bar{v} - v)}B\wedge\bar{B}, \label{BFR}\\
\end{equation}
 is $G^J(\R)_0$-invariant to the action on the Siegel--Jacobi upper
 half-plane $\mc{X}^J_1$
 
 \begin{gather}\label{TSLL}
   \left(\SL\times\C^2\ni\left(\begin{matrix}a&b\\c&d\end{matrix}\right),\alpha\right)\times
(v,u)\!=\!\left(\frac{av+b}{cv+d},\frac{u+nv+m}{cv+d}\right), \alpha=
m+\ii n.
\end{gather}

 We have the   change of variables ${\rm FC}_1\colon \
 (v,u)\rightarrow (v,\eta)$
 \begin{equation}\label{FC1MIN}
{\rm FC}_1\colon \ 2\ii u=(v+\ii)\eta-(v-\ii)\bar{\eta}, \qquad {\rm FC}^{-1}_1 \colon \ \eta=\frac{u\bar{v}-\bar{u}v + \ii(\bar{u}-u)}{\bar{v}-v}.\end{equation}

c) If
\begin{equation}\label{UVPQ}
\C\ni   u:=pv+q,~~p,q\in\R, \quad \C\ni v:=x+\ii y, ~x,y\in\R,~ y>0,\end{equation}
then
\begin{equation}\label{BUVpq}
  B(v,u)=\dd u -p \dd v,\end{equation}
and\
\begin{equation}\label{BUVpq1}
  B(v,u)=B(x,y,p,q)=v\dd p+\dd q=(x+\ii y)\dd p +\dd q.
\end{equation}

d)  If we have  \eqref{FC1MIN},  \eqref{UVPQ} and we write
\begin{equation}\label{CNI}
  \C\ni \eta:= \chi +\ii \psi,~ \chi, \psi\in\R,
\end{equation}
then we get  the change of coordinates
\begin{equation}\label{CNI1} 
 (x,y,p,q)\rightarrow (x,y,\chi,\psi): ~ \psi=p,~\chi=q,
\end{equation}
and 
 \begin{equation}\label{CNI11}B(v,u)=B(x,y,\chi,\psi)= x\dd
   \psi+\dd \chi + \ii y \dd \psi.
 \end{equation}
 We  also have the relations
 \begin{equation}\label{surp}
   \eta=  q+\ii p,~ q=\frac{1}{2}(\eta+\bar{\eta}),~p=\frac{1}{2\ii}(\eta-\bar{\eta}).
   \end{equation}

Given  \eqref{UVPQ} and
\begin{equation}\label{uxieta}
  \C\ni u:=\xi+\ii \rho, \quad \xi,\rho\in\R,\end{equation}
 we obtain  the change of variables
\begin{equation}\label{LASt1}
  (x,y,\xi,\rho)\rightarrow (x,y,p,q): ~\xi=px+q,~\rho=py,
\end{equation}
and 
\begin{equation}\label{LAST12}
  B(v,u)=\dd u-\frac{\rho}{y}\dd v=\dd (\xi+\ii \rho)-\frac{\rho}{y}\dd (x+\ii y).
 \end{equation}

If we have \eqref{UVPQ} and \eqref{CNI}, then, with \eqref{FC1MIN},  we have the change of
coordinates
\begin{equation}\label{LASt}
  (x,y,\xi,\rho)\rightarrow (x,y,\chi,\psi):~~\xi=\psi x+\chi,~~\rho=\psi
  y.
  \end{equation}

\end{Proposition}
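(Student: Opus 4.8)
The plan is to treat Proposition \ref{PRFC} as a chain of elementary coordinate changes applied to the two explicit objects produced by the coherent-state machinery: the K\"ahler two-form $\omega_{\mc{D}^J_1}(w,z)$ of \eqref{kk1} and the one-form $A(w,z)=\dd z+\bar\eta\,\dd w$ that appears inside it. Part a) is the base case: I would first record that \eqref{E32} is indeed an involutive change of variables on $(\mc{D}_1,\C)$ by direct substitution ($z=\eta-w\bar\eta$ into $\eta=(z+\bar z w)/P$ and back), then differentiate $z=\eta-w\bar\eta$ to get $\dd z=\dd\eta-\bar\eta\,\dd w-w\,\dd\bar\eta$, whence $A(w,z)=\dd z+\bar\eta\,\dd w=\dd\eta-w\,\dd\bar\eta$, which is \eqref{E32a}. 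To see that $FC^*\omega_{\mc{D}^J_1}$ is no longer K\"ahler, I would compute its Hermitian coefficient matrix in the $(w,\eta)$ coordinates and check that the closedness criterion \eqref{EQK} fails — equivalently, exhibit one pair of indices for which $\partial h_{\alpha\bar\beta}/\partial z_\gamma\neq\partial h_{\gamma\bar\beta}/\partial z_\alpha$; the presence of the $w\,\dd\bar\eta$ term with a $w$-dependent coefficient is exactly what breaks it. The invariance law \eqref{etaA} follows from \eqref{22} and \eqref{E32}: substitute the M\"obius action of $(w,z)$ into $\eta=(z+\bar z w)/P$ and simplify using $|\mc P|^2-|\mc Q|^2=1$; the $P$-denominators cancel and one is left with the affine action $\eta\mapsto\mc P(\eta+\alpha)+\mc Q(\bar\eta+\bar\alpha)$.

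For part b) I would push everything through the partial Cayley transform \eqref{210}. The key computational identities are $1-|w|^2=\dfrac{-4\,\Im v}{|v+\ii|^2}$ (so $P$ maps to a multiple of $\bar v-v$) and $1-w=\dfrac{2\ii}{v+\ii}$; differentiating $w=(v-\ii)/(v+\ii)$ gives $\dd w=\dfrac{2\ii}{(v+\ii)^2}\dd v$, and differentiating $z=2\ii u/(v+\ii)$ gives $\dd z=\dfrac{2\ii}{v+\ii}\dd u-\dfrac{2\ii u}{(v+\ii)^2}\dd v$. Feeding these into $A=\dd z+\bar\eta\,\dd w$ and using $\bar\eta=(\bar z+z\bar w)/\bar P$ expressed in $(v,u)$-variables, one collects the common factor $2\ii/(v+\ii)$ and the bracket collapses to $\dd u-\dfrac{u-\bar u}{v-\bar v}\dd v=B(v,u)$, which is \eqref{BFR2}. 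Then \eqref{BFR} is obtained from \eqref{kk1} by substituting the transformation law of $\dd w\wedge\dd\bar w$ and of $A\wedge\bar A$; the two scalar prefactors $2k/P^2$ and $\nu/P$ turn into $2k/(\bar v-v)^2$ and $2\nu/(\ii(\bar v-v))$ after the Cayley Jacobians are absorbed, reproducing the Berndt--K\"ahler form. The invariance \eqref{TSLL} is the conjugate of \eqref{22} under $\Phi$, using the standard correspondence $\mr{SU}(1,1)\cong\SL$. Finally $FC_1$ in \eqref{FC1MIN} is just $FC$ in \eqref{E32} rewritten through $z=2\ii u/(v+\ii)$: solve $z=\eta-w\bar\eta$ for $u$ using $w=(v-\ii)/(v+\ii)$ to get $2\ii u=(v+\ii)\eta-(v-\ii)\bar\eta$, and invert.

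Parts c) and d) are then purely algebraic. For c): with $u=pv+q$, $p,q\in\R$, we have $\dd u=p\,\dd v+v\,\dd p+\dd q$, and $\Im u=p\,\Im v$ so that $\dfrac{u-\bar u}{v-\bar v}=p$; hence $B=\dd u-p\,\dd v=v\,\dd p+\dd q$, giving \eqref{BUVpq}–\eqref{BUVpq1}, and writing $v=x+\ii y$ splits it into real and imaginary parts. For d): comparing $2\ii u=(v+\ii)\eta-(v-\ii)\bar\eta$ from \eqref{FC1MIN} with $u=pv+q$ from \eqref{UVPQ} — substitute $\eta=\chi+\ii\psi$ and $v=x+\ii y$, expand, and match the coefficients of $v$ and the constant term — yields $\psi=p$, $\chi=q$, i.e.\ \eqref{CNI1}, and therefore \eqref{surp}: $\eta=q+\ii p$. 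Substituting into \eqref{BUVpq1} gives \eqref{CNI11}. The remaining relations \eqref{LASt1}, \eqref{LAST12}, \eqref{LASt} come the same way: write $u=\xi+\ii\rho$ with $\xi,\rho\in\R$ and match against $u=pv+q=px+q+\ii py$ to read off $\xi=px+q$, $\rho=py$; then $\rho/y=p$ reproduces \eqref{BUVpq} in the form \eqref{LAST12}, and composing with \eqref{CNI1} gives \eqref{LASt}.

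The only genuinely non-routine step is the assertion in a) that $\omega_{\mc{D}^J_1}(w,\eta)=FC^*\omega_{\mc{D}^J_1}(w,z)$ fails to be K\"ahler: everything else is substitution and bookkeeping, whereas here one must correctly identify the Hermitian part of the transformed two-form (the $FC$-image of $A\wedge\bar A$ is $(\dd\eta-w\,\dd\bar\eta)\wedge(\dd\bar\eta-\bar w\,\dd\eta)$, whose coefficients depend on $w$ in a way not compatible with \eqref{EQK}) and verify the obstruction explicitly. I would isolate that computation, present it in full, and relegate the Cayley-transform Jacobian manipulations of b) to a short display, since they are mechanical once the three identities for $P$, $1-w$, $\dd w$ are in hand.
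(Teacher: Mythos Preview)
Your overall plan---treat every assertion as an explicit substitution starting from $A(w,z)$ and $\omega_{\mc{D}^J_1}$---is exactly the paper's approach; the paper simply delegates most of the calculations to \cite{jac1,SB14,SB15,nou} while you write them out. Parts b), c) and d) go through precisely as you describe: the three Cayley identities you list for $P$, $1-w$, $\dd w$ are the only ingredients needed for b), and c)--d) are pure bookkeeping.

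There is one genuine gap, in the ``not K\"ahler'' step of a). You intend to compute the Hermitian matrix of $FC^*\omega_{\mc{D}^J_1}$ in the $(w,\eta)$-coordinates and show that \eqref{EQK} fails, on the grounds that ``the presence of the $w\,\dd\bar\eta$ term with a $w$-dependent coefficient is exactly what breaks it.'' But if you actually expand
\[
FC^*(A\wedge\bar A)=(\dd\eta-w\,\dd\bar\eta)\wedge(\dd\bar\eta-\bar w\,\dd\eta),
\]
the $w$-dependent cross terms hit $\dd\eta\wedge\dd\eta=0$ and $\dd\bar\eta\wedge\dd\bar\eta=0$ and vanish; what survives is $(1-|w|^2)\,\dd\eta\wedge\dd\bar\eta=P\,\dd\eta\wedge\dd\bar\eta$, so that
\[
FC^*(-\ii\,\omega_{\mc{D}^J_1})=\frac{2k}{P^2}\,\dd w\wedge\dd\bar w+\nu\,\dd\eta\wedge\dd\bar\eta .
\]
This is a diagonal $(1,1)$-form whose coefficient matrix trivially satisfies \eqref{EQK}; the obstruction you anticipate never materialises. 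The paper does \emph{not} take your route here: it appeals to the general principle from Weil \cite{AWeil} that a non-holomorphic coordinate change need not preserve the K\"ahler property, and for the ``direct verification'' it inserts $\eta=\eta(w,z)$ from \eqref{E32} into the matrix \eqref{kma} and checks \eqref{EQK} with $(w,\eta)$ now regarded as \emph{independent} variables---which is a different object from the Hermitian matrix of the pulled-back form. If you want to reproduce the paper's argument you should perform \emph{that} check on \eqref{kma}, not on $FC^*\omega$; your version of the step, as written, will not close.
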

\begin{proof}
 a)  We  determined  in \cite{jac1,SB14} the scalar
product  $K(w,z):=(e_{wz},e_{wz}$) 
of two Perelomov's CS states based on the Siegel--Jacobi disk.
The associated \Ka~ potential on $\mc{D}^J_1$ is
 \begin{equation}\label{FWZ}f(w,z)=-2k\log(P)+\nu\frac{2|z|^2+\bar{w}z^2+w\bar{z}^2}{2P}.\end{equation}
 In \cite{jac1}  we  applied \eqref{KALP} to the potential \eqref{FWZ}
 and we obtained the \Ka~two-form on $\mc{D}^J_1$
   \begin{equation}\label{FFCX}-\ii \omega_{\mc{D}^J_1}(w,z)=f_{z\bar{z}}\dd  z\wedge \dd
     \bar{z}+ f_{z\bar{w}}\dd z\wedge \dd \bar{w}-\bar{f}_{z\bar{w}}\dd
     \bar{z}\wedge\dd w+f_{w\bar{w}}\dd w\wedge \dd \bar{w}.\end{equation}

     The matrix corresponding  to the metric associated with the   \Ka~two-form \eqref{FFCX}
reads \cite[(5.11)]{SB14}
     \begin{equation}\label{kma}
       h(w,z)
       =\left(\begin{array}{cc}f_{z\bar{z}}&f_{z\bar{w}}\\\bar{f}_{z\bar{w}}&f_{w\bar{w}}\end{array}\right)
       =\left(\begin{array}{cc}\frac{\nu}{P}&\nu\frac{\eta}{P}\\\nu\frac{\bar{\eta}}{P}&
     \frac{2k}{P^2}+\nu\frac{|\eta|^2}{P}\end{array}\right).
       \end{equation}
It is easy to verify that the matrix elements of the matrix \eqref{kma} verify
the conditions \eqref{EQK} and the Jacobi disk $\mc{D}^J_1$ is a
\Ka~manifold.

If we apply to the \Ka~two-form \eqref{kk1} the non-holomorphic
$FC$-transform, then the complex two-form \eqref{mind}    in the
variables $(w,\eta)$ is
not a \Ka~two-form \cite[Proposition 2, p 50]{AWeil}. This fact can be
directly verified:  if we introduce in \eqref{kma} the  value of
$\eta=\eta(w,z)$ given in \eqref{E32}, then the conditions \eqref{EQK}
for a complex two-form to  be a fundamental two-form are not satisfied.  

For the invariance \eqref{etaA} see \cite[(6.4)]{nou}.

     b)  
The \Ka~two-form on the Siegel--Jacobi upper half-plane   $\mc{X}^J_1$
was determined from 
$\omega_{\mc{D}^J_1}(w,z)$ using the partial Cayley  transform in  \cite{jac1,SB14,SB15}.
       Note that in the Berndt--\Ka~ approach in \cite{bern84} the \Ka~
       potential \eqref{POT} is just ``guessed'', see Comment \ref{CM1}.

      We have $$\frac{A\wedge\bar{A}}{P}=\frac{1}{y} B\wedge\bar{B}.$$

 For \eqref{FC1MIN} see \cite[(3.4)]{SB14}.
  \end{proof}

Correlating \eqref{surp} in Proposition \ref{PRFC} with  the  \cite[Comment
6.12]{jac1},  (2.1),
(2.2) and  \cite[Lemma 2]{SB14}, we make the following surprising
remark giving a
``CS -  meaning'' to the  $S$-parameters $p,q$.
\begin{Remark}\label{STR}
  The FC-transform \eqref{E32} relates Perelomov's  un-normalized CS-vector
  $e_{w,z}$ with the normalized one $\underline{e}_{w,\eta}$
 \[
    \underline{e}_{w\eta }=(e_{wz}, e_{wz})^{-\frac{1}{2}}e_{wz},\quad
    w\in\mc{D}_1,~z,\eta \in \C,
  \]
  
and the S-variables $p,q$ are related to the parameter $\eta$ by the
simple relation
\begin{equation}\label{strange}
    \eta=q +\ii p.
  \end{equation}
 \end{Remark}

\section{The Heisenberg group embedded in  $\rm{Sp}(2,\R)$}\label{HS1}
In this section, extracted from \cite[Section ~3]{SB19b}, we
summarize  the
parametrization of the Heisenberg group used in \cite{bs}.

The composition law of the 3-dimensional  Heisenberg group $\rm{H}_1$ in \eqref{SP2R} is
\[
(\lambda,\mu,\kappa)(\lambda',\mu',\kappa')=(\lambda+\lambda',\mu+\mu',\kappa+\kappa'+\lambda\mu'-\lambda'\mu).
\]

As in
\eqref{SP2R} with $M=\mathbb{1}_2$,  we denote an element of $\rm{H}_1$ embedded in $\text{Sp}(2,\R)$ by
\begin{equation}\label{Real2} \rm{H}_1\ni g =  \left(\begin{array}{cccc}
 1& 0&  0 &   \mu\\
\lambda &1& \mu & \kappa\\
0 & 0& 1 &  -\lambda\\
         0& 0& 0& 1
\end{array}\right),~ g^{-1} =  \left(\begin{array}{cccc} 1& 0& 0&   -\mu\\
-\lambda &1& -\mu & -\kappa\\
0 & 0& 1 &  \lambda\\
         0& 0& 0& 1\end{array}\right).\end{equation}

A basis of the Lie algebra $\got{h}_1=<P,Q,R>_{\R}$  of the
Heisenberg group  $\rm{H}_1$ in the realization 
\eqref{Real2} in the space $M(4,\R)$ consists of the matrices
\[  
P=\left(\begin{array}{cccc} 
0&0&0&0\\
1&0&0&0\\
0&0&0&-1\\
0&0&0&0\end{array}\right),
~Q=\left(\begin{array}{cccc} 
0&0&0&1\\
0&0&1&0\\
0&0&0&0\\
0&0&0&0\end{array}\right),~R=\left(\begin{array}{cccc} 
0&0&0&0\\
0&0&0&1\\
0&0&0&0\\
0&0&0&0\end{array}\right),
\]
which verify the commutation relations
\begin{equation}
\label{PQT1}[P,Q]=2R,~[P,R]=[Q,R]=0.
\end{equation}

If we write $$\rm{H}_1\ni g(\lambda,\mu,\kappa)=\mathbb{1}_4+\lambda P+\mu
Q+\kappa R,$$ then, using  the formulas \eqref{GMGP}, see details in
\cite[Section 3]{SB19a}
\begin{equation}\label{GMGP}
  g^{-1}\dd g=P\lambda^p+Q\lambda^q+R\lambda^r,~~\dd g
  g^{-1}=P\rho^p+Q\rho^q+R\rho^r,
  \end{equation}
we find the left-invariant one-forms and vector fields
\[
\left\{\begin{array}{l}
\lambda^p  = \dd \lambda\\ 
\lambda^q  = \dd {\mu}\\ 
\lambda^r  = \dd {\kappa}- \lambda\dd {\mu} +{\mu}\dd \lambda \end{array}\right.;  
{\mbox{~~~~~~~~~~~~~~}}\left\{\begin{array}{l} L^p=\pa _{\lambda}
                                -{\mu}\pa_ {\kappa}\\
           L^q= \pa_{\mu}+\lambda \pa_{\kappa}\\ L^r=\pa_{\kappa}
                            \end{array} \right. .
\]

\section{The $\SL$ group embedded in  $\rm{Sp}(2,\R)$}\label{SLSR}
In this section we extract from \cite[Section 4]{SB19b} the minimum
information we need  to understand the embedding of $\SL$ in the
4-dimensional matrix realization of $\rm{Sp}(2,\R)$.

An element $M\in \SL$ and its inverse are  realized as elements of  
 $\text{Sp}(2,\R)$ by the relations
\begin{equation}\label{ALOS}
M=
\left(\begin{array}{cc}a&b\\c&d\end{array}\right)\!\mapsto~
g\!=\!\left(\!\begin{array}{cccc} a& 0&b
          &0\\0&1&0&0\\c&0&d&0\\0&0&0&1\end{array}\!\right)\!\in G^J_1(\R),~ 
 g^{-1}\!=\!\left(\begin{array}{cccc} d& 0&-b
                &0\\0&1&0&0\\-c&0&a&0\\0&0&0&1\end{array}\right).  \end{equation}
A basis of the Lie algebra $\got{sl}(2,\R)=<F,G,H>_{\R}$ consists of
the matrices in $M(4,\R)$
\[
  F= \left(\begin{array}{cccc} 0& 0&1
          &0\\0&0&0&0\\0&0&0&0\\0&0&0&0\end{array}\right), ~G= \left(\begin{array}{cccc} 0& 0&0
          &0\\0&0&0&0\\1&0&0&0\\0&0&0&0\end{array}\right),~H= \left(\begin{array}{cccc} 1& 0&0
                                                                      &0\\0&0&0&0\\0&0&-1&0\\0&0&0&0\end{array}\right) .
\]
    $F,~G,~H$ verify the commutation relations \eqref{FGHCOM}
    \begin{equation}\label{FGHCOM}
[F,G]=H,~[G,H]=2G,~ [H,F]=2F.
\end{equation}

    With the
representation \eqref{ALOS}, we have
\[
  g^{-1}\dd g=F\lambda^f+G\lambda^g+H\lambda^h,~\dd
  g g^{-1}=F\rho^f+G\rho^g+H\rho^h.
\]
Using the parameterization \eqref{ALOS} for $\SL$, we find 
\[  
\left\{\begin{array}{l}
\lambda^f  = d\dd b-b\dd d\\ 
\lambda^g  = -c \dd a + a\dd c\\ 
\lambda^h  = d \dd a - b \dd c=c\dd b-a \dd d\end{array}\right.;  
{\mbox{~~~~~~~~~~~~~~}}\left\{\begin{array}{l} \rho^f=-b \dd a +a\dd b\\
           \rho^g= d \dd c - c\dd d\\ \rho^h=d \dd a- c\dd b
\end{array} \right. . \]         

 The Iwasawa       decomposition $M=NAK$
of  an element $M$ as in  \eqref{ALOS} reads
\[      
  M=\left(\begin{array}{cc}1&x\\0&
                                   1\end{array}\right)
\left(\begin{array}{cc}y^{\frac{1}{2}}&
           0\\0& y^{-\frac{1}{2}}
          \end{array}\right)
\left(\begin{array}{cc}
\cos\theta &\sin\theta\\-\sin\theta
           &\cos\theta \end{array}\right),~y>0.
     \]    
     We find
     \begin{subequations}\label{SCXYT}
\begin{align}
a &=  y^{1/2}\cos\theta - xy^{-1/2}\sin\theta,\quad
b = y^{1/2}\sin\theta + xy^{-1/2}\cos\theta,\\
c &= -y^{-1/2}\sin\theta,\quad
d  = y^{-1/2}\cos\theta,
\end{align}
\end{subequations}
and 
\begin{equation}\label{SCINV}
x=\frac{ac+bd}{d^2+c^2}, ~ y=\frac{1}{d^2+c^2},~\sin\theta
=-\frac{c}{\sqrt{c^2+d^2}},~\cos\theta =\frac{d}{\sqrt{c^2+d^2}}. 
\end{equation}    
We determined in \cite{SB19b} the left-invariant vector fields $L^f,L^g,L^h$ on
$\SL$, dual  orthogonal  
to the left-invariant  one-forms  $\lambda^f,\lambda^g,\lambda^h$.
We  introduced the left-invariant one-forms
\begin{equation}\label{1l2l3l}
\lambda_1:=\sqrt{\alpha}(\lambda^f+\lambda^g), ~~~
~\lambda_2:=2\sqrt{\alpha}\lambda^h,~~~
\lambda_3:=\sqrt{\beta}(\lambda^f-\lambda^g). 
\end{equation}
In \cite{SB19b} we determined the  left-invariant vector fields $L^j$ such
that  $<\lambda_i|L^j>=\delta_{ij},~i,j=1,2,3$, where
\begin{equation}\label{1L2L3L}
L^1  :=\frac{1}{2\sqrt{\alpha}}(L^f+L^g),~~~
L^2   :=\frac{1}{2\sqrt{\alpha}}L^h,~~~
L^3  :=\frac{1}{2\sqrt{\beta}}(L^f-L^g).
\end{equation}
\section{The Jacobi group  $G^J_1(\R)$ embedded in  $\rm{Sp}(2,\R)$}\label{JG1}
\subsection{The composition law}

The real Jacobi group of index one is the semidirect product
of the real three-dimensional Heisenberg group $\rm{H}_1$ with
$\text{SL}(2,\R)$. The Lie algebra  of the  Jacobi
group $G^J_1(\R)$ is
given by
$\got{g}^J_1(\R)=<P,Q,R,F,G,H>_{\R}$, where the first three generators
$P,Q,R$ of $\got{h}_1$ verify the commutation relations
\eqref{PQT1}, the generators  $F,G,H$ of $\got{sl}(2,\R)$ verify 
 the commutation relations \eqref{FGHCOM} and the ideal $\got{h}_1$ in
 $\got{g}^J_1(\R)$ is determined by the non-zero commutation relations
\begin{equation}\label{MORCOM}
[P,F]=Q,~[Q,G]=P,~[P,H]=P,~ [H,Q]=Q. 
\end{equation}

Let $ g:= (M,h)\in G^J_1(\R) $, where $M$ is as in \eqref{ALOS}, while
$ h:=(X,\kappa)\in\rm{H}_1,~ X:=(\lambda,\mu)\in\R^2$ and similarly for
$g':=(M',h')$.
 The composition law of $G^J_1(\R)$ is
\begin{equation}\label{COMPL}
gg'=g_1,
\text{~where~}M_1=MM',~X_1=XM'+X',~\kappa_1=\kappa+\kappa'+\left|\begin{array}{c}XM'\\X'\end{array}\right|,
\end{equation} 
where
\begin{subequations}
\begin{align*}
g_1&=\left(\begin{array}{cc}aa'+bc'& ab'+bd'\\ ca'+dc'&
                                                        cb'+dd'\end{array}\right),\\
(\lambda_1,\mu_1) &= (\lambda'+\lambda a'+\mu c',\mu'+\lambda b'+\mu
                    d'),\\
\kappa_1 &= \kappa+\kappa'+\lambda q'-\mu p'.
\end{align*}
\end{subequations}
The inverse element  of  $g\in G^J_1(\R)$ is given by \begin{equation}\label{INVV}
(M,X,\kappa)^{-1}=(M^{-1},-Y,-\kappa)\rightarrow~g^{-1}=\left(\begin{array}{cccc}
                                                        d& 0& -b&
                                                                   -\mu\\-p
                                                          &1&-q
                                                                 &-\kappa\\-c&0&a&\lambda\\ 0&0&0&1\end{array}\right),
\end{equation}
where $Y$ was defined in \eqref{DEFY} and similarly for $Y'$, while $g$ has the general form given in \eqref{SP2R}.

Using the notation in \cite[p 9]{bs}, the {\it EZ-coordinates}
(EZ - from Eichler \& Zagier) of an element $g\in G^J_1(\R)$ as in \eqref{SP2R} are
$(x,y,\theta,\lambda,\mu,\kappa)$, where $M$ is related with
$(x,y,\theta)$ by \eqref{SCXYT}, \eqref{SCINV}.

The {\it S-coordinates} (S - from Siegel)  of $g=(M,h)\in G^J_1(\R)$ are
$(x,y,\theta,p,q,\kappa)$, where $(x,y,\theta)$ are expressed as
functions of $M\in \SL$  by \eqref{SCXYT}, \eqref{SCINV}.

\subsection{The action}
Let 
\begin{equation}\label{TAUZ}
\C\ni \tau:=x+\ii y,~~~
\C\ni  z:=p\tau+q=\xi+\ii \rho,\quad ~x,y,p,q, \xi,\rho\in\R. \end{equation}
 Let
$\mc{X}^J_1\approx \mc{X}_1\times\R^2$ be the Siegel--Jacobi upper half-plane,
where $\mc{X}_1=\{\tau\in\C| ~y:=\Im \tau>0\}$ is the Siegel upper half-plane,  and
$\tilde{\mc{X}}^J_1\approx\mc{X}^J_1\times\R$ denotes  the extended Siegel--Jacobi upper half-plane. Simultaneously with
the Jacobi group $G^J_1(\R)$ consisting of elements $(M,X,\kappa)$, we
considered the group $G^J(\R)_0$  of elements $(M,X)$ \cite{jac1,SB19b}.
Then: 
\begin{lemma}\label{LEMN} a) The action  $G^J(\R)_0\times \mc{X}^J_1\rightarrow
  \mc{X}^J_1$
is given by \begin{equation}\label{AC1}
(M,X)\times
(\tau',z')=(\tau_1,z_1),\emph{\text{~where~}}\tau_1=\frac{a\tau'+b}{c\tau'+d},~z_1=\frac{z'+n\tau'+m}{c\tau'+d}.\end{equation}

b) If  $z'=p'\tau'+q'$, $\tau'=x'+\ii y'$ as in \eqref{TAUZ}, then the
action 
\begin{equation}\label{AC11}
(M,X)\times (x',y',p',q')=(x_1,y_1,p_1,q_1)
\end{equation}
is given by the formula
\begin{equation}\label{AC12}
(p_1,q_1)=(p,q)+(p',q')
\left(\begin{array}{cc}a & b\\c & d\end{array}\right)^{-1}=(p+dp'-cq',q-bp'+aq').
\end{equation}

c)  The action $G^J_1(\R)\times \tilde{\mc{X}}^J_1\rightarrow
  \tilde{\mc{X}}^J_1$ is given by 
\begin{equation}\label{AC2}
\begin{split}
& (M,X,\kappa)\times
 (\tau',z',\kappa')  =(\tau_1,z_1,\kappa_1),\\
& (M,X,\kappa)\times (x',y',p',q',\kappa')  =(x_1,y_1,p_1,q_1,\kappa_1),\\
 & \kappa_1  =\kappa +\kappa' +\lambda
q'-\mu p',~
(p',q')= (\frac{\rho'}{y'},\xi'-\frac{x'}{y'}\rho'),~
(\lambda,\mu)=(p,q)M.
\end{split}
\end{equation}
d) The action $G^J_1(\R)\times G^J_1(\R)\rightarrow G^J_1(\R)$
corresponding to the composition law \eqref{COMPL} is 
\[     
 (M,X,\kappa)\times
 (x',y',\theta',p',q',\kappa')=(x_1,y_1,\theta_1,p_1,q_1,\kappa_1).
\]  
\end{lemma}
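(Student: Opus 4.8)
The plan is to treat the four items as a single computation carried out at successively coarser levels of coordinatization, reducing everything to the composition law \eqref{COMPL} and the definitions \eqref{DEFY}, \eqref{TAUZ}. First I would establish item a). The action of $G^J(\R)_0$ on $\mc{X}^J_1$ is the one inherited from the embedding into $\operatorname{Sp}(2,\R)$ of \eqref{SP2R}: an element $(M,X)$ acts on $\mc{X}_1$ by the usual fractional linear transformation $\tau'\mapsto (a\tau'+b)/(c\tau'+d)$, and the Heisenberg part translates the fibre coordinate $z'$ affinely. To verify the precise formula $z_1=(z'+n\tau'+m)/(c\tau'+d)$ I would either quote the known action \eqref{TSLL} on $(v,u)$ from Proposition \ref{PRFC}(b) (with $v=\tau$, $u=z$, $\alpha=m+\ii n$) — which is already stated there — or recompute it directly from the matrix product in $\operatorname{Sp}(2,\R)$ using \eqref{SP2R}; the two agree, so a) is immediate.

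For item b), the point is to re-express the action of a) in the $(x,y,p,q)$ coordinates, where $\tau=x+\ii y$, $z=p\tau+q$ as in \eqref{TAUZ}. The $(x,y)$ part is unchanged — it is just the $\operatorname{SL}(2,\R)$-action on the Siegel upper half-plane written out in real and imaginary parts via \eqref{SCXYT}, \eqref{SCINV}. The substance is the transformation of $(p,q)$. Here I would substitute $z'=p'\tau'+q'$ into $z_1=(z'+n\tau'+m)/(c\tau'+d)$, write $z_1=p_1\tau_1+q_1$ with $\tau_1=(a\tau'+b)/(c\tau'+d)$, and compare coefficients of $\tau'$ after clearing the denominator $c\tau'+d$. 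This is a short linear algebra exercise: one finds that $(p_1,q_1)$ differs from the Heisenberg contribution $(p,q)$ (with $X=(\lambda,\mu)$, $(p,q)=XM^{-1}$ as in \eqref{DEFY}) by the term $(p',q')M^{-1}$, giving exactly \eqref{AC12}: $(p_1,q_1)=(p,q)+(p',q')\left(\begin{smallmatrix}a&b\\c&d\end{smallmatrix}\right)^{-1}=(p+dp'-cq',\,q-bp'+aq')$. The key identity driving this is the cocycle relation $Y_1=YM'+Y'$ read through $Y=XM^{-1}$, i.e. the second equation of \eqref{COMPL} transported to the $(p,q)$-variables; I would make that translation explicit.

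Item c) adds the central coordinate $\kappa$ and passes from $G^J(\R)_0$ to the full $G^J_1(\R)$. The action on $(\tau,z)$ is unchanged; only the new coordinate $\kappa_1$ must be computed. From the composition law \eqref{COMPL}, $\kappa_1=\kappa+\kappa'+\left|\begin{smallmatrix}XM'\\X'\end{smallmatrix}\right|$, and the $2\times 2$ determinant $\left|\begin{smallmatrix}XM'\\X'\end{smallmatrix}\right|$ equals $\lambda q'-\mu p'$ once one writes $X=(\lambda,\mu)$ and uses $Y'=X'M'^{-1}=(p',q')$, hence $X'=(p',q')M'$ and the identity $\left|\begin{smallmatrix}XM'\\X'\end{smallmatrix}\right|=\left|\begin{smallmatrix}X\\X'M'^{-1}\end{smallmatrix}\right|\det M'=\left|\begin{smallmatrix}\lambda&\mu\\p'&q'\end{smallmatrix}\right|=\lambda q'-\mu p'$. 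Combined with $(\lambda,\mu)=(p,q)M$ (the inverse of \eqref{DEFY}) and the conversion $(p',q')=(\rho'/y',\,\xi'-(x'/y')\rho')$ coming from $z'=\xi'+\ii\rho'=p'\tau'+q'$ with $\tau'=x'+\ii y'$, this yields all the relations displayed in \eqref{AC2}. Finally d) is just the statement that, assembling a)–c) together with the $\operatorname{SL}(2,\R)$-part \eqref{SCXYT}–\eqref{SCINV} governing $(x,y,\theta)$, the resulting map on the six $S$-coordinates is precisely the composition law \eqref{COMPL} written out; there is essentially nothing to prove beyond bookkeeping.

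The main obstacle is bookkeeping consistency across the three coordinate systems — EZ-coordinates $(\lambda,\mu)$ versus $S$-coordinates $(p,q)$ versus the complexified $(z,\eta)$ and $(u,\cdot)$ variables — and in particular keeping the convention \eqref{DEFY}, namely $(p,q)=(\lambda,\mu)M^{-1}$, straight when it is used in both directions. Once that dictionary is fixed, each verification is a one-line determinant or coefficient-matching identity; the only genuine check is that the affine $(p,q)$-action in b) is by $M^{-1}$ acting on the \emph{right} and adds to rather than conjugates the base point, which is forced by the semidirect-product structure $X_1=XM'+X'$ in \eqref{COMPL}.
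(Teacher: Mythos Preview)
Your proposal is correct and is exactly the natural computation. The paper does not give a proof of this lemma at all: it is stated without proof, with parts a)--c) attributed to \cite[Lemma~5.1]{SB19b} and only d) flagged as new (and d), as you note, is pure bookkeeping once a)--c) are in hand). Your derivation from the composition law \eqref{COMPL}, the dictionary \eqref{DEFY} between $(\lambda,\mu)$ and $(p,q)$, and the coordinate relations \eqref{TAUZ} is the standard verification and matches what the cited paper does; in particular your determinant identity $\bigl|\begin{smallmatrix}XM'\\X'\end{smallmatrix}\bigr|=\bigl|\begin{smallmatrix}X\\X'M'^{-1}\end{smallmatrix}\bigr|\det M'=\lambda q'-\mu p'$ is precisely the step behind the displayed $\kappa_1$ in \eqref{COMPL}.
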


\subsection{Fundamental vector fields}

 We recall  the notion of FVF, see 
 \cite[Appendix A.1]{SB19b}, \cite[p 122]{helg} and   \cite[p
 51]{kn1}.
 
Let $M=G/H$ be  a homogeneous $n$-dimensional manifold and let us
suppose that  the group  $G$
acts transitively on $M$ from the {\it  left} , $G\times M\rightarrow M:~ g\times
x=y$, where $y=(y_1,\dots,y_n)^t$.
Then $g(t)\times
x =y(t)$, where $g(t)=\exp(tX)$,  $t\in\R$, generates  a curve  $y(t)$
in $M$ with
$y(0)=x$ and $\dot{y}(0)=X$. The
{\it fundamental vector field} associated  to $X\in\got{g}$ at $x\in M$ is defined as 
\[       
    X^*_x:=\frac{\dd}{\dd t}y(t)|_{t=0}=\frac{\dd}{\dd t}(\exp(tX)\times
x)|_{t=0}=\sum_{i=1}^n(X^*_i)_x\frac{\pa }{\pa z_i}, ~~(X^*_i)_x=\frac{\dd
  y_i(t)}{\dd t}|_{t=0}.
\]      

With  the  action given in Lemma \ref{LEMN}, we get the  FVF on some homogenous spaces
associated to  the  real Jacobi group, see Proposition 5.2 in \cite{SB19b},
reproduced  below. Only the item g) is new. 
\begin{Proposition}\label{4pr}
a) The FVF expressed in the coordinates $(\tau,z)$
of the Siegel--Jacobi upper half-plane $\mc{X}^J_1$ on which the
reduced  Jacobi group $G^J(\R)_0$ acts
  by \eqref{AC1} are given by the
holomorphic vector fields
\begin{subequations}\label{EQQ1}
\begin{align}
F^* & =\pa_{\tau}, ~ G^*= -\tau^2\pa_{\tau} -z\tau\pa_{z},~H^*=2\tau\pa_{\tau}+z\pa_z;\\ 
P^* &  =\tau\pa_z,~Q^*=\pa_z, ~R^* =0. 
\end{align}
\end{subequations}

b) The   real holomorphic FVF corresponding to
$\tau:=x+\ii y, ~y>0 $, $z:=\xi+\ii \rho$  in the variables
$(x,y,\xi,\rho)$ are
\begin{subequations}\label{EQQ2}
\begin{align}
F^* & =F^*_1, ~ G^*= G^*_1 +(\rho y-\xi
      x)\pa_{\xi}-(\xi y+ x\rho)\pa{_\rho};\\ H^* & = H^*_1
     +\xi\pa_{\xi}+\rho\pa_{\rho}, ~
 P^*  =x\pa_{\xi}+y\pa_{\rho},~Q^*=\pa_{\xi},     ~R^* =0 , 
\end{align}
\end{subequations}
where $F^*_1,G^*_1,H^*_1$ are the FVF
\eqref{FUNDFGH} of 
the homogenous manifold $\mc{X}_1$
\begin{equation}\label{FUNDFGH}
F^*_1=\frac{\pa
}{\pa x},~ G^*_1=(y^2-x^2)\frac{\pa }{\pa x}-2 xy\frac{\pa}{\pa y},~
H^*_1=2(x\frac{\pa}{\pa x}+y\frac{\pa }{\pa y})
\end{equation} associated 
to  the generators $F,G,H$ of $\got{sl}(2,R)$ corresponding to the
action  \eqref{AC1} of $\rm{SL(2},\R)$ on $\mc{X}_1$.

c) If we express the FVF  in the variables
$(x,y,p,q)$, where $\xi=px+q$, $\rho= p y$, we find 
  \begin{subequations}\label{EQQ3}
\begin{align}
F^* & =F^*_1 -p\pa_q, ~ G^*= G^*_1-q\pa_p, ~
       H^*=H^*_1-p\pa_p+q\pa_q;\\
P^*  & =\pa_p,~Q^*=\pa_{q}, ~R^* =0. 
\end{align}
\end{subequations}
d) If  we consider the action \eqref{AC2} of $G^J_1(\R)$ on the points
$(\tau,z,\kappa)$ of $\tilde{\mc{X}}^J_1$, we get 
instead of \eqref{EQQ1} the FVF  in
   the variables $(\tau,z,p,q,\kappa)$ 
\begin{subequations}\label{NEWPQR1}
\begin{align}
F^* & =\pa_{\tau}, ~ G^*= -\tau^2\pa_{\tau} -z\tau\pa_{z},~H^*=2\tau\pa_{\tau}+z\pa_z;\\ 
P^* &  =\tau\pa_z+q\pa_{\kappa},~Q^*=\pa_z-p\pa_{\kappa},~R^*
      =\pa_{\kappa}, p=\frac{\Im (z)}{\Im (\tau)},~q=\frac{\Im(\bar{z}\tau)}{\Im(\tau)}. 
\end{align}
\end{subequations}
e) Instead of \eqref{EQQ2}, we get  the FVF in
$\tilde{\mc{X}}^J_1$ in the variables
$(x,y,\xi,\rho,\kappa)$
\begin{subequations}\label{NEWPQR2}
\begin{align}
F^* & =F^*_1, ~ G^*= G^*_1 +(\rho y-\xi
      x)\pa_{\xi}-(\xi y+ x\rho)\pa{_\rho};\\ H^* & = H^*_1
     +\xi\pa_{\xi}+\rho\pa_{\rho}, ~
 P^*  =x\pa_{\xi}+y\pa_{\rho}+q\pa_{\kappa},~Q^*=\pa_{\xi} -p\pa_{\kappa},     ~R^* =\pa_{\kappa} .
\end{align}
\end{subequations}
f) Instead of \eqref{EQQ3}, we get the FVF in the variables
$(x,y,p,q,\kappa)$ 
\begin{subequations}\label{NEWPQR3}
\begin{align}
F^* & =F^*_1 -p\pa_q, ~ G^*= G^*_1-q\pa_p, ~
       H^*=H^*_1-p\pa_p+q\pa_q;\label{EQQ31x}\\
P^* & =\pa_p+q\pa_k,~Q^*=\pa_q-p\pa_k,~R^*=\pa_{\kappa}.
\end{align}
\end{subequations}
g) The FVF in the S-variables corresponding to the
composition law \eqref{COMPL} of the Jacobi group $G^J_1(\R)$ are the
same as in \eqref{NEWPQR3}, except  $G^*$ 
\begin{subequations}\label{NEWPQR4}
\begin{align}
F^* & =F^*_1 -p\pa_q, ~ G^*= G^*_1-y\frac{\pa}{\pa \theta}, ~
       H^*=H^*_1-p\pa_p+q\pa_q;\label{EQQ31xx}\\
P^* & =\pa_p+q\pa_k,~Q^*=\pa_q-p\pa_k,~R^*=\pa_{\kappa}.
\end{align}
\end{subequations}
\end{Proposition}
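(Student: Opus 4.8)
The plan is to read off each fundamental vector field directly from the definition recalled above, $X^{*}_{g'}=\frac{\dd}{\dd t}\big(\exp(tX)\times g'\big)\big|_{t=0}$, using the explicit actions of Lemma \ref{LEMN}. Items a)--f) are precisely \cite[Proposition 5.2]{SB19b}, so one may either just quote them or, more uniformly, re-derive a)--g) by one recipe. First I would exponentiate the six generators inside the $4\times4$ realization of $\got{g}^J_1(\R)$ of Sections \ref{HS1}--\ref{SLSR}: $P,Q,R,F,G$ are nilpotent, hence $\exp(tX)=\mathbb{1}_4+tX$, while $\exp(tH)$ is the diagonal one-parameter group; in the $\SL$-block, $\exp(tP),\exp(tQ),\exp(tR)$ reduce to $\mathbb{1}_2$, $\exp(tF)$ is unipotent upper triangular, $\exp(tH)=\mathrm{diag}(\e^{t},\e^{-t})$, and $\exp(tG)$ is unipotent lower triangular. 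Then I would substitute each one-parameter subgroup, as the left factor, into the relevant action --- \eqref{AC1}, \eqref{AC11} for a)--c), \eqref{AC2} for d)--f), the composition law \eqref{COMPL} for g) --- evaluated at a generic point in the appropriate coordinates, and differentiate at $t=0$.

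For the new item g) the point is $g'=(M',X',\kappa')$ with $S$-coordinates $(x',y',\theta',p',q',\kappa')$, where $M'$ has the Iwasawa form \eqref{SCXYT} and $(p',q')=X'(M')^{-1}$, and I would handle the six generators in three groups. For $X\in\{P,Q,R\}$ the $\SL$-part of $\exp(tX)\,g'$ stays equal to $M'$, so $(x,y,\theta)$ are fixed and only the Heisenberg and $\kappa$ increments survive, read off from $X_{1}=XM'+X'$ and the $\kappa$-clause of \eqref{COMPL}; this gives $P^{*},Q^{*},R^{*}$. For any $X$ in the $\got{sl}(2,\R)$-part one has $X_{1}=X'$, hence $(p_1,q_1)=(p',q')\exp(-tX)$: for $X=F$ the unipotent upper-triangular factor shifts $x'\mapsto x'+t$, leaves $y',\theta'$ fixed, and produces $F^{*}=F^{*}_{1}-p\,\pa_{q}$; for $X=H$ the diagonal factor rescales $x'\mapsto\e^{2t}x'$, $y'\mapsto\e^{2t}y'$, fixes $\theta'$, and gives $H^{*}=H^{*}_{1}-p\,\pa_{p}+q\,\pa_{q}$, with $F^{*}_{1},H^{*}_{1}$ from \eqref{FUNDFGH}. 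So five of the six fields coincide with those of item f), \eqref{NEWPQR3}.

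The crux is $G^{*}$. Since $\exp(tG)=\left(\begin{smallmatrix}1&0\\t&1\end{smallmatrix}\right)$ is lower triangular, $\exp(tG)\,M'$ is no longer in $NAK$ form and must be re-Iwasawa-decomposed; writing the product of $\exp(tG)$ with the $NA$-part of $M'$ again in $NAK$ form and using the entry formulas \eqref{SCXYT}, \eqref{SCINV}, one finds that the new angle $\varphi(t)$ obeys $\tan\varphi(t)=-ty'/(tx'+1)$, hence $\dot\varphi(0)=-y'$, which is the term $-y\,\pa_{\theta}$; the $(x,y)$ increment equals the derivative at $t=0$ of the M\"obius action $\tau\mapsto\tau/(t\tau+1)$ of $\exp(tG)$ on $\mc{X}_{1}$, namely $-\tau^{2}$, i.e.\ $G^{*}_{1}$; and the $(p,q)$ and $\kappa$ increments follow exactly as for $F$ and $H$. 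Collecting the three contributions yields $G^{*}$ as in \eqref{NEWPQR4}. Finally I would run the standing consistency check that the six vector fields close, under the Lie bracket, into a copy of $\got{g}^J_1(\R)$ obeying \eqref{PQT1}, \eqref{FGHCOM}, \eqref{MORCOM} (up to the sign convention proper to fundamental vector fields of a left action).

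I expect no conceptual obstacle: the statement is entirely a direct computation. The one delicate point is the bookkeeping in the Iwasawa recombination of $\exp(tG)\,M'$ --- keeping the signs straight and reconciling the outcome with item f), where under the projection $G^J_1(\R)\to\tilde{\mc{X}}^J_1=G^J_1(\R)/\mathrm{SO}(2)$ the $\pa_{\theta}$-component generated by $G$ is removed while the motion of the Heisenberg coordinates $(p,q)$ must be matched against \eqref{NEWPQR3}.
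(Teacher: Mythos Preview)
Your plan is exactly the direct computation the paper has in mind; the paper gives no separate argument for this proposition, merely citing \cite[Proposition 5.2]{SB19b} for items a)--f) and declaring g) new, so your recipe of exponentiating each generator, plugging into the relevant action of Lemma~\ref{LEMN}, and differentiating at $t=0$ is the intended method.

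One concrete point to watch in item g). Your own prescription for the $\got{sl}(2,\R)$-generators gives $(p_1,q_1)=(p',q')\exp(-tX)$; for $X=G$ this reads $(p_1,q_1)=(p',q')\left(\begin{smallmatrix}1&0\\-t&1\end{smallmatrix}\right)=(p'-tq',\,q')$, so the $(p,q)$-contribution to $G^{*}$ is $-q\,\pa_{p}$, exactly as in item f). Combined with your Iwasawa computation (which correctly produces $G^{*}_{1}$ and $-y\,\pa_{\theta}$), you will obtain
\[
G^{*}=G^{*}_{1}-q\,\pa_{p}-y\,\pa_{\theta},
\]
whereas the displayed formula \eqref{NEWPQR4} omits the $-q\,\pa_{p}$ term. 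Your proposed Lie-bracket check detects this immediately: with the printed $G^{*}$ one gets $[F^{*},G^{*}]=-H^{*}_{1}\neq -H^{*}$, while including $-q\,\pa_{p}$ restores $[F^{*},G^{*}]=-H^{*}$ via $[-p\,\pa_{q},-q\,\pa_{p}]=p\,\pa_{p}-q\,\pa_{q}$. The same conclusion follows from the consistency with item f) under the projection $G^{J}_{1}(\R)\to\tilde{\mc{X}}^{J}_{1}$ that you mention. So carry out the computation as you outlined, but expect to find (and record) this missing term in \eqref{NEWPQR4}.
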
 

\section{Invariant metrics}\label{INV}

Using the formula
\[ 
 g^{-1}\dd g=
 \lambda^FF+\lambda^GG+\lambda^HH+\lambda^PP+\lambda^QQ+\lambda^RR,\]
where $g$ is as in \eqref{SP2R} and  $g^{-1}$ as in \eqref{INVV}, 
we  calculated in \cite{SB19b} the left-invariant one-forms on $G^J_1(\R)$. 

The left-invariant vector fields $L^{\alpha}$ for the
real Jacobi group  $G^J_1(\R)$ are  orthogonal with respect to the
invariant one-forms $\lambda^{\beta}$,
 $$<\lambda^{\beta}|L^{\alpha}>=\delta_{\alpha\beta},
 ~~\alpha,\beta= F,G,H,P,Q,R.$$
 
Besides the formulas for $\lambda_1,\lambda_2,\lambda_3$ defined in
\eqref{1l2l3l}, we have  introduced in \cite{SB19b} the left-invariant one-forms
 \[  
\lambda_4:= \sqrt{\gamma}\lambda^P,~\lambda_5:=\sqrt{\gamma}\lambda^Q,~\lambda_6:=\sqrt{\delta}\lambda^R,
\]      
where $\lambda^P,\lambda^Q,\lambda^R$ are defined  in \cite[p 18]{SB19b}.
Also, besides the left-invariant vector fields $L^1,L^2,L^3$ defined 
in \eqref{1L2L3L}, we have 
introduced the left-invariant one forms
\[    
L^4:=\frac{1}{\sqrt{\gamma}}L^P,~L^5:=\frac{1}{\sqrt{\gamma}}L^Q,~L^6:=\frac{1}{\sqrt{\delta}}L^R,
\] 
where $L^P,L^Q,L^R$ are defined in \cite[Proposition 5.3]{SB19b}. 

The vector fields  $L^i$, $i=1,\dots,6$ verify the   commutations
relations
\begin{subequations}\label{FUFUFU}
\begin{align}
[L^1,L^2]& =-\frac{\sqrt{\beta}}{\alpha}L^3 & [L^2,L^3]&=\frac{1}{2\sqrt{\beta}}L^1 & [L^3,L^1] &=\frac{1}{\sqrt{\beta}}L^2\\
[L^1,L^4]& = -\frac{1}{2\sqrt{\alpha}}L^5 & [L^1,L^5]&=-\frac{1}{2\sqrt{\alpha}}L^4
 & [L^1,L^6 ]&= 0 \\
[L^2,L^4]&= -\frac{1}{2\sqrt{\alpha}}L^4& [L^2,L^5 ] &=
\frac{1}{2\sqrt{\alpha}}L^5 & [L^2,L^ 6]&=0  \\
[L^3,L^4]&= - \frac{1}{2\sqrt{\alpha}}L^5 & [L^3,L^5] &=\frac{1}{2\sqrt{\beta}}L^4 & [L^3,L^6]&= 0 \\
[L^4,L^5] & = \frac{2\sqrt{\delta}}{\gamma}L^6& [L^4,L^6]&= 0&
                                                               [L^5,L^6]&= 0.  
\end{align}
\end{subequations}

Following \cite[Proposition 5.4]{SB19b}, we obtain from the \Ka~ two-form \eqref{BFR} in 
 Proposition \ref{PRFC}  the metric on $\mc{X}^J_1$ in the convention
 of \eqref{asm},   replacing $v\rightarrow
  \tau, ~u \rightarrow z,~k\rightarrow 2c_1,~\nu\rightarrow \frac{c_2}{2}$. 

 Only \eqref{METRS4} in the
next proposition is new.

\begin{Proposition}\label{Pr4}
The two-parameter  balanced  metric  on the
  Siegel--Jacobi upper half-plane $\mc{X}^J_1$, left-invariant to the
  action \eqref{AC1}, \eqref{AC11} and  \eqref{AC12},  respectively \eqref{AC2} of
 the  reduced group  $G^J(\R)_0$,  is given by the formulas
\begin{subequations}\label{METRS}
\begin{align}\dd s_{\mc{X}^J_1}^2(\tau,z) & =-c_1\frac{\dd \tau\dd
  \bar{\tau}}{(\tau-\bar{\tau})^2}+\frac{2\ii
  c_2}{\tau-\bar{\tau}}(\dd
z-p\dd\tau)\times cc, ~~p= \frac{z-\bar{z}}{\tau-\bar{\tau}},\label{METRS1}\\
\label{METRS2}\dd s^2_{\mc{X}^J_1}(x,y,p,q) & \!=\!
c_1\frac{\dd x^2\!+\!\dd   y^2}{4y^2} +\frac{c_2}{y}\left[(x^2+y^2)\dd p^2+\dd q^2+2x\dd
                                              p\dd q\right]\\
 & =c_1\frac{\dd x^2+\dd   y^2}{4y^2} \!+\!c_2\frac{x^2\!+\!y^2}{y}\left[ (\dd p\!+\!\frac{x}{x^2\!+\!y^2}\dd
                                q)^2\!+\!(\frac{y\dd q}{x^2\!+\!y^2})^2   \right],\nonumber\\
\dd s_{\mc{X}^J_1}^2(x,y,\xi,\rho) &= c_1\frac{\dd x^2+\dd
  y^2}{4y^2} + \label{METRS3}\\
 &+\frac{c_2}{y}\left[\dd \xi^2+\dd \rho^2
                                     +(\frac{\rho}{y})^2(\dd x^2+\dd
                                     y^2)-2\frac{\rho}{y}(\dd x\dd
                                     \xi+\dd y\dd
   \rho)\right],\nonumber\\
  \dd s_{\mc{X}^J_1}^2(x,y,\chi,\psi) &= c_1\frac{\dd x^2+\dd
  y^2}{4y^2}  
 +\frac{c_2}{y}\left[(x\dd \psi+\dd \chi)^2+y^2\dd \psi^2 \right].\label{METRS4}
\end{align}
\end{subequations}
\end{Proposition}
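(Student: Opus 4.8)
The plan is to derive \eqref{METRS4} directly from \eqref{METRS1} by substituting the chain of coordinate changes established in Proposition \ref{PRFC}. Recall that \eqref{METRS1} is the metric written invariantly as
\[
\dd s^2_{\mc{X}^J_1}(\tau,z)=-c_1\frac{\dd\tau\,\dd\bar\tau}{(\tau-\bar\tau)^2}+\frac{2\ii c_2}{\tau-\bar\tau}\,B(\tau,z)\otimes\overline{B(\tau,z)},
\]
where, after the renaming $v\to\tau$, $u\to z$, the one-form $B$ is the object defined in \eqref{BFR2}. So the whole task reduces to re-expressing $B$ in the coordinates $(x,y,\chi,\psi)$ and tracking the prefactor $-\ii(\tau-\bar\tau)^{-1}=1/(2y)$.

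First I would recall that Proposition \ref{PRFC} d) already does the computation of $B$ in exactly these coordinates: combining \eqref{UVPQ} (with $v=\tau$), the FC${}_1$-transform \eqref{FC1MIN}, and the identification \eqref{CNI1} that $\psi=p$, $\chi=q$, equation \eqref{CNI11} gives
\[
B(\tau,z)=B(x,y,\chi,\psi)=x\,\dd\psi+\dd\chi+\ii y\,\dd\psi .
\]
Then I would simply substitute this into the invariant form of the metric. The first term $-c_1\,\dd\tau\,\dd\bar\tau/(\tau-\bar\tau)^2$ becomes $c_1(\dd x^2+\dd y^2)/(4y^2)$ since $\tau-\bar\tau=2\ii y$, exactly as in all the other lines of \eqref{METRS}. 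For the second term, with $\tau-\bar\tau=2\ii y$ one has $2\ii c_2/(\tau-\bar\tau)=c_2/y$, and
\[
B\otimes\bar B=\bigl[(x\,\dd\psi+\dd\chi)+\ii y\,\dd\psi\bigr]\otimes\bigl[(x\,\dd\psi+\dd\chi)-\ii y\,\dd\psi\bigr]
=(x\,\dd\psi+\dd\chi)^2+y^2\,\dd\psi^2,
\]
the cross terms cancelling because they are purely imaginary and the expression is symmetrized (equivalently, this is just $|B|^2$ for the real and imaginary parts of $B$). Assembling the two pieces yields precisely \eqref{METRS4}.

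There is essentially no hard part here: the statement is a bookkeeping corollary of the coordinate dictionary already set up in Proposition \ref{PRFC}. The only point requiring a little care is the consistency of the various parametrizations — one must check that the $(\chi,\psi)$ appearing in \eqref{CNI} via $\eta=\chi+\ii\psi$ is genuinely the pair $(q,p)$ via \eqref{surp}, so that the metric \eqref{METRS4} in $(x,y,\chi,\psi)$ is literally \eqref{METRS2} in $(x,y,q,p)$ after the relabelling $q\to\chi$, $p\to\psi$. Indeed one can cross-check: expanding $(x\,\dd\psi+\dd\chi)^2+y^2\,\dd\psi^2=(x^2+y^2)\dd\psi^2+\dd\chi^2+2x\,\dd\psi\,\dd\chi$, which matches the bracket in \eqref{METRS2} under $(\psi,\chi)\leftrightarrow(p,q)$. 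This cross-check is what certifies that \eqref{METRS4} is correct and that no Jacobian factors have been dropped in passing through the FC${}_1$-transform.
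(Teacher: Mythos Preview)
Your argument is correct and follows essentially the same route as the paper: you compute $B$ in the $(x,y,\chi,\psi)$ coordinates via \eqref{CNI11}, substitute into the invariant form of the metric, and read off \eqref{METRS4}. The paper also notes the alternative of obtaining \eqref{METRS4} by substituting \eqref{CNI1} into \eqref{METRS2} (or \eqref{LASt} into \eqref{METRS3}), which is exactly your cross-check.
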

\begin{proof}
 In the
  expression \eqref{BFR2} of $B(v,u)$, $v\in\mc{X}_1$, $u\in\C$,  we
  introduce the  parametrizations of $v,u$ appearing in
  Proposition \ref{PRFC} and then we pass from the \Ka~two-form
  \eqref{BFR}
  of \Ka--Berndt 
  to the associated Riemannian metric with the standard formula \eqref{asm}.
  
   In Proposition \ref{PRFC} a) we have obtained from the
  \Ka~two-form  $\omega_{\mc{D}^J_1}$  \eqref{kk1}  the K\"ahler two-form $\omega_{\mc{X}^J_1}$
    \eqref{BFR}, from which we get the metric
      \eqref{METRS1} with formula \eqref{asm}.

       In formula \eqref{BUVpq} we introduce \eqref{UVPQ} and we get
      \eqref{BUVpq1}. With \eqref{asm} we find \eqref{METRS2}.

       In formula \eqref{BUVpq} we introduce \eqref{uxieta} and
      with 
      \eqref{LASt1},  we get \eqref{LAST12}. With
      \eqref{asm},   we find \eqref{METRS3}.

       In \eqref{METRS3} we introduce \eqref{LASt}, or in
      \eqref{METRS2} we introduce \eqref{CNI1}, and we get
      \eqref{METRS4}.

      Also, if in \eqref{BUVpq1} we take into consideration
      \eqref{CNI1}, we get \eqref{CNI11}. We apply \eqref{asm} to get \eqref{METRS4}.
  \end{proof}

Below we reproduce  the Comment 5.5 in \cite{SB19b} with some
completions: 
\begin{Comment}\label{CM1}
Berndt  \cite[p 8]{bern84}  considered the closed two-form
$\Omega=\dd \bar{\dd} f'$
of Siegel--Jacobi upper half-plane,  $G^J(\R)_0$-invariant to the action \eqref{AC1},
 obtained from the K\"ahler potential 
\begin{equation}\label{POT}
f'(\tau,z)= c_1\log (\tau-\bar{\tau}) -\ii
c_2\frac{(z-\bar{z})^2}{\tau-\bar{\tau}}, ~c_1,c_2>0.\end{equation}
Formula \eqref{POT} is  presented by Berndt as 
``communicated to the author  by \Ka''. Also in \cite[p 8]{bern84} is 
given  our  equation \eqref{METRS1}, while our \eqref{METRS2} corrects two
printing errors in Berndt's paper.

Later, in \cite[\S~ 36]{cal3}, reproduced also
in \cite{cal},
\Ka~    argues how to choose the  potential as in
\eqref{POT},   
see  also  \cite[(9)  \S ~ 37]{cal3}, 
where  $c_1=-\frac{k}{2}$, $c_2=\ii\nu\pi$, i. e.
\begin{equation}\label{POT1}
  f'(\tau,z)= -\frac{k}{2}\log\frac{ \tau-\bar{\tau}}{2\ii}
  -\ii\pi\nu \frac{(z-\bar{z})^2}{\tau-\bar{\tau}}.\end{equation}

Once the \Ka~ potential \eqref{POT1} is known, we apply the recipe
\eqref{KALP2}
$$-\ii
\omega_{\mc{X}^J_1}(\tau,z)=f'_{\tau\bar{\tau}}\dd
\tau\wedge\dd \bar{\tau}+f'_{\tau\bar{z}}\dd \tau\wedge\dd \bar{z}
-\bar{f}'_{\tau\bar{z}}\dd \bar{\tau}\wedge \dd z+f'_{z\bar{z}}\dd z\wedge
\dd \bar{z}.$$

 The metric {\emph{(8)}} in \cite{cal3}  differs from  the metric
 \eqref{METRS} by a factor of two,
 since  the Hermitian metric used by  \Ka~  is
 $\dd s^2=2g_{i\bar{j}}\dd z_i\otimes \dd\bar{z}_j$. If in \eqref{POT1} we
 take $k/2\rightarrow k$, we have 
\begin{subequations}
  \begin{align*}
 f'_{\tau}&=-k\frac{1}{\tau-\bar{\tau}}+\ii \pi \nu
    \frac{(z-\bar{z})^2}{(\tau-\bar{\tau})^2},~
    f'_{\tau\bar{\tau}}=-k\frac{1}{(\tau-\bar{\tau})^2}+2\ii
            \pi\nu\frac{(z-\bar{z})^2}{(\tau-\bar{\tau})^3}, \\
 f'_{\tau\bar{z}}& =-2\ii
    \pi\nu\frac{z-\bar{z}}{(\tau-\bar{\tau})^2},~
    f'_{z}  =-2\ii\pi\nu\frac{z-\bar{z}}{\tau-\bar{\tau}}, ~f'_{z\bar{z}}=2\ii\pi\nu\frac{1}{\tau-\bar{\tau}},
 \end{align*}
\end{subequations}
and  we get  \eqref{BFR}. 
 Relation \eqref{BFR}
 has been  obtained by Berndt  \cite[p 30]{bern}, where the
 denominator of the first term is misprinted  as $v-\bar{v}$ 
(or $\tau-\bar{\tau}$ in our notations). Equation
 \eqref{METRS2} appears also in  \cite[p 30]{bern} and \cite[p 62]{bs}.

 We also recall that in  \cite[(9.20)]{jac1} we observed that the
 \Ka~potential \eqref{POT1} should correspond to a reproducing kernel
 \begin{equation}\label{Ktau}K(\tau,z)=
   y^{-\frac{k}{2}}\exp(2\pi p^2y).\end{equation}
 In \cite[(4.3)]{gem}, see also \cite[Proposition
 4.1]{gem},  we have presented a
 generalization of \eqref{Ktau} for $\mc{X}^J_n$, obtained by Takase in \cite[\S 9]{tak}.

 Yang
 calculated  in \cite{Y07}  the 
  metric on $\mc{X}^J_n$, invariant to the action of $G^J_n(\R)_0$.  The equivalence of the metric of Yang with the metric
  obtained via CS  on $\mc{D}^J_n$ and then transported  to
  $\mc{X}^J_n$ via partial Cayley transform is underlined in
  \cite{nou}. In particular, the metric 
  \eqref{METRS3}
 appears in \cite[p 99]{Y07} for the particular values $c_1=1$,
 $c_2=4$. See also \cite{Yan,Y08,Y10}. 
\end{Comment}

\vspace{2ex}
We recall that a  vector field $X$ on a Riemannian manifold $(M,g)$  is
 called an {\it infinitesimal isometry} or a  {\it   Killing
vector field} if   the local 1-parameter group of local
transformations by $X$ in a neighbourhood   of each point of $M$ consists
of local isometries \cite[Proposition 3.2]{kn1}, i.e. 
 \begin{equation}\label{LX}L_Xg=0,~~X\in\got{D}^1(M),\end{equation} where  $L_X$ is the Lie
 derivative on $M$. 
 The condition \eqref{LX} for a vector field~\eqref{SUMXX}
\begin{equation}\label{SUMXX}X=\sum_{i=1}^nX^i\frac{\pa }{\pa
    x^i} \end{equation} 
 to be a~Killing vector field amounts to  its {\it contravariant}  
 components to verify the equations \cite[ A.6]{SB19b}
 \begin{gather} \label{LG1}
   X^{\mu}\pa_{\mu}g_{\lambda\chi}+g_{\mu\chi}\pa_{\lambda}X^{\mu}+g_{\lambda\mu}\pa_{\chi}X^{\mu}=0,
   \qquad \lambda,\chi,\mu =1,\dots, \dim{M}=n.
 \end{gather}

It is well known that {\it
   the fundamental vector field~$X^*$ on a~Riemannian homogeneous
   manifold is a~Killing vector},  see e. g.  \cite[p~4]{btv},
 \cite[Proposition~2.2, p~139]{koda} or  \cite[Remark A.4]{SB19b}. 

The next proposition completes  \cite[Proposition 5.6]{SB19b}.
\begin{Proposition}\label{Pr5}
The   three-parameter metric on  the extended  Siegel--Jacobi upper
  half -plane  
  $\tilde{\mc{X}}^J_1$, in the  S-coordinates $(x,y,p,q,\kappa)$
\begin{subequations}\label{linvG}
\begin{align}\dd s^2_{\tilde{\mc{X}}^J_1}& =\dd
    s^2_{\mc{X}^J_1}(x,y,p,q)+\lambda^2_6(p,q,\kappa)\\
  &=\frac{\alpha}{y^2}(\dd x^2+\dd
    y^2)+[\frac{\gamma}{y}(x^2+y^2)+\delta q^2]\dd p^2+
    (\frac{\gamma}{y}+\delta p^2)\dd q^2 +\delta \dd \kappa^2\\
& + 2(\gamma\frac{x}{y}-\delta pq)\dd p\dd q +2\delta (q\dd p\dd
  \kappa-p\dd q \dd \kappa) \nonumber\end{align}\end{subequations}
 is left-invariant with  respect  to  the action of the Jacobi group
 $G^J_1(\R)$  given in {\emph{Lemma\,\ref{LEMN}}}.

If $$X=X^1\pa_x+X^2\pa_y+X^3\pa_p+X^4\pa_q+X^5\pa_{\kappa}\in\got{g}^J_1(\R)\ominus
<L_3>,$$
 then the fundamental vector fields  \eqref{EQQ31xx}  verify the  following Killing equations \eqref{LG1}   with
 respect to the metric \eqref{linvG}, invariant to the action
 \eqref{AC2}:
  \begin{align*}
    & -X^2+y\pa_xX^1=0,\nonumber\\
    & \pa_xX^2+\pa_yX^1=0, \nonumber\\ 
    & (\gamma\frac{x^2+y^2}{y}+\delta
    q^2)\pa_xX^3+\delta_q\pa_xX^5+(\gamma\frac{x}{y}-\delta
      pq)\delta_xX^4=0, \nonumber\\ 
    & (\gamma \frac{x}{y}-\delta p q)\pa_x
      X^3+(\frac{\gamma}{y}+\delta p^2)\pa_x
      X^4+\frac{\alpha}{y^2}\pa_qX^2=0, \nonumber\\ 
    & \delta pq\pa_xX^3-\delta p \pa _x X^4+\delta \pa_x X^5
    +\frac{\alpha}{y^2}\pa_{\kappa}X^1=0, \nonumber\\ 
    & -X^2+y \pa_yX^2=0, \nonumber\\ 
     &(\gamma\frac{x}{y}-\delta pq)\pa_y
      X^4+(\gamma\frac{x^2+y^2}{y}+\delta q^2)\pa_y X^3+\delta_q
       \pa_{\kappa}X^5+\frac{\alpha}{y^2}\pa_pX^2=0, \nonumber\\ 
    &(\gamma\frac{x}{y}-\delta pq)\pa_yX^3+(\frac{\gamma}{y}+\delta
    p^2)\pa_yX^4-\delta p \pa_y
      X^5+\frac{\alpha}{y^2}\pa_qX^2=0,\nonumber\\ 
    &  \delta q
      \pa_yX^3-\delta_p\pa_yX^4+\delta\pa_yX^5+\frac{\alpha}{y^2}\pa_kX^1=0,
      \nonumber\\ 
    & 2\gamma \frac{x}{y}X^1+\gamma(1-\frac{x^2}{y^2})X^2+2\delta q
      X^4+2(\gamma\frac{x^2+y^2}{y}+\delta
      q^2)\pa_pX^3+2(\gamma\frac{x}{y}-\delta pq)\pa_pX^4+\nonumber \\ &2\delta q
      \pa_p X^5=0,
      \nonumber\\   
    & \gamma\frac{x}{y}(yX^1-xX^2)-\delta(qX^3+pX^4)+2(\gamma\frac{x}{y}-\delta
    pq)\pa_pX^3+(\frac{\gamma}{y}+\delta p^2)\pa_pX^4-\delta p\delta_p
      \pa_pX^5 +\nonumber\\
    & \gamma(\frac{x^2+y^2}{y}+\delta
    q^2)\pa_qX^3+(\gamma\frac{x}{y}-\delta pq)\pa_qX^4
      +\delta_q\pa_qX^5 =0,\nonumber\\
 & \delta X^4+\delta q \pa_p X^3-\delta p\pa_p X^4 +\delta \pa_p X^5
    +(\gamma\frac{x^2+y^2}{y}+\delta q^2)\pa_{\kappa}X^3 +
    (\gamma\frac{x}{y}-\delta pq)\pa_{\kappa}X^4+\nonumber \\ & \delta q\pa_kX^5=0, \nonumber\\ 
    &  -\frac{\gamma}{y^2}X^2+2 \delta p X^3+2(\gamma\frac{x}{y}-\delta
      pq)\delta_qX^3+2(\frac{\gamma}{y}+\delta p^2)\pa_qX^4-2\delta pq
      \pa_qX^5=0, \nonumber\\ 
    & \delta X^3 +\delta q \pa _qX^3-\delta p \pa_qX^4+\delta
      \pa_qX^5+(\gamma\frac{x}{y}-\delta
      pq)\pa_{\kappa}X^3+(\frac{\gamma}{y}+\delta
      p^2)\pa_{\kappa}X^4-\delta_p\pa_{\kappa}X^5=0,\nonumber\\  
    &  q \pa_{\kappa}X^3-p\pa_kX^4+\pa_kX^5=0.\nonumber  
     \end{align*}

\end{Proposition}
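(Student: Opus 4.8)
The plan is to treat the two assertions separately: the left-invariance of the metric \eqref{linvG} I would dispatch structurally, and then organise the (longer) verification that the fundamental vector fields solve the Killing system.

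\emph{Left-invariance of \eqref{linvG}.} Rather than substituting the transformed coordinates, I would recognise the metric inside the global left-invariant coframe of $G^J_1(\R)$. One has $\dd s^2_{\tilde{\mc{X}}^J_1}=\dd s^2_{\mc{X}^J_1}(x,y,p,q)+\lambda_6^2$ with $\lambda_6=\sqrt{\delta}\,\lambda^R$, and $\dd s^2_{\mc{X}^J_1}(x,y,p,q)=\alpha\bigl[(\lambda^f+\lambda^g)^2+4(\lambda^h)^2\bigr]+\gamma\bigl[(\lambda^P)^2+(\lambda^Q)^2\bigr]$; thus the whole tensor equals $\lambda_1^2+\lambda_2^2+\lambda_4^2+\lambda_5^2+\lambda_6^2$ in the notation of \eqref{1l2l3l} and of the one-forms $\lambda_4,\lambda_5,\lambda_6$ introduced afterwards. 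As a sum of squares of left-invariant one-forms it is left-invariant on $G^J_1(\R)$. It contains no $\lambda_3=\sqrt{\beta}(\lambda^f-\lambda^g)$, so it annihilates the isotropy direction $<L_3>$ (recall $L^3\propto L^f-L^g$ spans $\got{so}(2)$), and the quadratic form it induces on $\got{g}^J_1(\R)\ominus<L_3>$ is $\Ad(\mr{SO}(2))$-invariant: on the $\got{sl}(2,\R)$ block $(\lambda^f+\lambda^g)^2+4(\lambda^h)^2$ is (a multiple of) the Killing form restricted to the non-compact plane $\langle F+G,H\rangle$, which is $\Ad(\mr{SO}(2))$-invariant --- this is precisely why \eqref{1l2l3l} is normalised this way --- while the $P,Q,R$ block is unaffected by the $\mr{SO}(2)$ factor. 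By the standard correspondence between $G$-invariant metrics on $G/H$ and $\Ad(H)$-invariant inner products on $\got{m}$, the tensor descends to a $G^J_1(\R)$-invariant Riemannian metric on $\tilde{\mc{X}}^J_1=G^J_1(\R)/\mr{SO}(2)$, invariant in particular to the explicit S-coordinate action \eqref{AC2} of Lemma \ref{LEMN}. (Alternatively one substitutes $(x_1,y_1,p_1,q_1,\kappa_1)$ from \eqref{AC2} into \eqref{linvG}, handling in turn the M\"obius part on $(x,y)$, the affine shift \eqref{AC12} on $(p,q)$ and the shift of $\kappa$, and checks invariance term by term; this reproduces \cite[Proposition 5.6]{SB19b}.)

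\emph{The Killing system.} I would first read off from \eqref{linvG} the non-zero components of $g$ in the ordered frame $(x,y,p,q,\kappa)$: $g_{xx}=g_{yy}=\alpha/y^2$, $g_{pp}=\frac{\gamma}{y}(x^2+y^2)+\delta q^2$, $g_{qq}=\frac{\gamma}{y}+\delta p^2$, $g_{\kappa\kappa}=\delta$, $g_{pq}=\gamma\frac{x}{y}-\delta pq$, $g_{p\kappa}=\delta q$, $g_{q\kappa}=-\delta p$, all other entries vanishing, and note that $g$ is independent of $\kappa$. Letting $(\lambda,\chi)$ run over the fifteen symmetric index pairs and expanding \eqref{LG1} with these $g_{\lambda\chi}$ produces precisely the fifteen displayed equations; writing that expansion out is the first part of the proof. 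For the second part I would invoke the principle recalled just before the proposition --- a fundamental vector field of a Riemannian homogeneous space is a Killing field \cite[p~4]{btv}, \cite[Proposition~2.2, p~139]{koda}, \cite[Remark A.4]{SB19b}: since \eqref{linvG} is $G^J_1(\R)$-invariant by the first part and the vector fields in question are the fundamental vector fields of the action \eqref{AC2} (Proposition \ref{4pr}\,f)), they automatically solve \eqref{LG1}. For completeness I would also substitute each generator by hand, with contravariant components read off from Proposition \ref{4pr}\,f): $R^*=(0,0,0,0,1)$, $Q^*=(0,0,0,1,-p)$, $P^*=(0,0,1,0,q)$, $F^*=(1,0,0,-p,0)$, $H^*=(2x,2y,-p,q,0)$ and $G^*=(y^2-x^2,-2xy,-q,0,0)$ in the frame $(x,y,p,q,\kappa)$; for the first four almost every term in each of the fifteen equations collapses.

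The anticipated difficulty is bookkeeping, not concept. In the invariance step a brute-force coordinate verification is long precisely because $\dd x,\dd y,\dd p,\dd q,\dd\kappa$ all transform under \eqref{AC2} and must be recombined --- the computation already done in \cite{SB19b}, which the coframe argument sidesteps. In the Killing step the obstacle is the size of the system (the full symmetric $5\times 5$ pattern) combined with having to test several generators; the field $G^*=G^*_1-q\pa_p$, with $G^*_1=(y^2-x^2)\pa_x-2xy\pa_y$, is the one that enters non-trivially in all fifteen equations at once and is therefore the real stress test, while the remaining generators each annihilate most of the terms and are quick. The point worth stressing is that, once invariance is established, the Killing property is automatic, so the true content of the proof is the identification of the abstract equations \eqref{LG1} with the explicit list, together with the closing consistency check.
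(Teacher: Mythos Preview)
The paper gives no self-contained proof of this proposition: it is presented as a completion of \cite[Proposition~5.6]{SB19b}, and the text immediately preceding it recalls the general fact that fundamental vector fields on a Riemannian homogeneous space are Killing. Your plan reconstructs exactly this implicit argument --- identify \eqref{linvG} as $\sum_{i\neq 3}\lambda_i^2$ in the left-invariant coframe (whence left-invariance on the group, cf.\ \eqref{MTRTOT}), check it descends to $\tilde{\mc X}^J_1=G^J_1(\R)/\mr{SO}(2)$, then obtain the Killing equations by writing out \eqref{LG1} for the explicit $g_{\lambda\chi}$ and invoke the general principle for the fundamental vector fields of Proposition~\ref{4pr}\,f). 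That is the paper's route, carried out with more detail than the paper itself supplies.

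One imprecision to fix: you write that ``the $P,Q,R$ block is unaffected by the $\mr{SO}(2)$ factor''. This is not so --- from \eqref{MORCOM} one has $[F-G,P]=-Q$ and $[F-G,Q]=P$, so $\Ad(\mr{SO}(2))$ rotates the $(P,Q)$-plane. What you need (and what is true) is that the quadratic form $(\lambda^P)^2+(\lambda^Q)^2$ is invariant under that rotation, hence $\lambda_4^2+\lambda_5^2$ is $\Ad(\mr{SO}(2))$-invariant; together with the invariance of $\lambda_1^2+\lambda_2^2$ on the $\got{sl}(2,\R)$ block and the trivial action on $R$, this gives the $\Ad(H)$-invariance needed for descent. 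Your conclusion stands, but the stated reason should be corrected. A second, cosmetic point: the proposition as printed cites \eqref{EQQ31xx} (item~g), which involves $\partial_\theta$); the vector fields you actually test are those of item~f), \eqref{NEWPQR3}, which is the correct choice for $\tilde{\mc X}^J_1$ --- the label in the statement appears to be a typo in the paper.
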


The next proposition is a completion of \cite[Theorem 5.7]{SB19b}.
\begin{Proposition}\label{BIGTH}

The four-parameter  left-invariant metric on the real  Jacobi group  $G^J_1(\R)$ in the 
$S$-coordinates $(x,y,\theta,p,q,\kappa)$  is
\begin{equation}\label{MTRTOT}
\begin{split}
\dd
s^2_{G^J_1(\R)} &  =\sum_{i=1}^6\lambda_i^2\\
 & =\alpha\frac{\dd
x^2+\dd y^2}{y^2}   +\beta(\frac{\dd x}{y}+2\dd
\theta)^2 \\
+& \frac{\gamma}{y}[\dd q^2+(x^2+y^2)\dd p^2+2x\dd p\dd
q]+\delta(\dd \kappa-p\dd q+q\dd p)^2.
\end{split}
\end{equation}

We have $<\lambda_i|L^j>=\delta_{ij}, i,j=1,\dots ,6$, where  the vector fields $L^i$, $i=1,\dots,6$
verify the commutation relations \eqref{FUFUFU} and 
are orthonormal with respect to the  metric \eqref{MTRTOT}.

If
$$X=X^1\pa_x+X^2\pa_y+X^3\pa_{\theta}+X^4\pa_p+X^5\pa_q
 +X^6\pa_{\kappa}\in \got{g}^J_1(\R),$$
then the fundamental  vector fields \eqref{NEWPQR4}   in the $S$-variables, associated to  the
generators $F,G,H,P,Q,R$,  corresponding to the left
action \eqref{COMPL}  of the Jacobi group on himself  are
solution of the following Killing equations  associated to the invariant metric
\eqref{MTRTOT}
  \begin{align*}
  & -(\alpha +\beta )X^2+(\alpha+\beta )y\pa_xX^1+2\beta y^2\pa_xX^3=0,\nonumber\\
    &\alpha\pa_xX^2+(\alpha+\beta)\pa_yX^1+2\beta y \pa_yX^3=0,
      \nonumber\\
    &-2\beta X^2+2\beta y \pa_x X^1+4\beta y^2\pa_xX^3+(\alpha+\beta)\pa_{\theta}X^1+2\beta
     y\pa_{\theta}X^3=0, \nonumber\\
     &  (\gamma\frac{x^2+y^2}{y}+\delta
    q^2)\pa_xX^4+(\gamma\frac{x}{y}-\delta p q)\pa_x X^5+\delta
    q\pa_{x}X^6+\frac{\alpha+\beta}{y^2}\pa_pX^1+2\frac{\beta}{y}\pa_pX^3=0,
       \nonumber\\
    &(\gamma\frac{x}{y}-\delta pq)\pa_xX^4+(\frac{\gamma}{y}+\delta
      p^2)\pa_xX^6-\delta p \pa_xX^6+\frac{\alpha+\beta}{y^2}\pa_qX^1+2\frac{\beta}{y}\pa_q
      X^3=0,  \nonumber \\ 
    & \delta q \pa_xX^4 -\delta p\pa_xX^5+\delta\pa_xX^6+\frac{\alpha
      +\beta}{y^2}\pa_{\kappa}X^3+2\frac{\beta}{y}\pa_{\kappa}X^3,
      \nonumber \\ 
    &-X^2+ y\pa_yX^2=0, \nonumber \\ 
    &2\frac{\beta}{y}\pa_yX^1+4\beta\pa_yX^3+\frac{\alpha}{y^2}\pa_{\theta}X^2
      =0,  \nonumber \\ 
     &(\gamma\frac{x^2+y^2}{y}+\delta
      q^2)\pa_yX^4+(\gamma\frac{x}{y}-\delta pq)\pa_yX^5+\delta q
      \pa_yX^6+\frac{\alpha}{y^2}\pa_pX^2 =0, \nonumber \\ 
    & (\gamma\frac{x}{y}-\delta pq)\pa_yX^4+(\frac{\gamma}{y}+\delta
      p^2)\pa_yX^5-\delta p \pa_yX^6+\frac{\alpha}{y^2}\pa_qX^2=0,
      \nonumber \\ 
    & \delta q \pa_yX^4-\delta p \pa_y X^5+\delta \pa_y
      X^6+\frac{\alpha}{y^2}\pa_{\kappa}X^2=0,  \nonumber \\ 
    & \pa_{\theta}X^1 +2y\pa_{\theta}X^3=0, \nonumber \\ 
    & (\gamma\frac{x^2+y^2}{y}+\delta
    q^2)\pa_{\theta}X^4+(\gamma\frac{x}{y}-\delta pq)\pa_{\theta}X^5
    +\delta
    q\pa_{\theta}X^6+2\frac{\beta}{y}\pa_pX^1+4\beta\pa_pX^6=0,
      \nonumber \\ 
    &  (\gamma\frac{x}{y}-\delta
      pq)\pa_{\theta}X^4+(\frac{\gamma}{y}+\delta
      p^2)\pa_{\theta}X^5-\delta
      p\pa_{\theta}X^6+2\frac{\beta}{y}\pa_qX^3+4\beta\pa_qX^3=0,
      \nonumber \\ 
     & \delta q\pa_{\theta}X^4+(\frac{\gamma}{y}-\delta
    p)\pa_{\theta}X^5+\delta \pa_{\theta}X^6+4\beta
      \pa_{\kappa}X^3+2\frac{\beta}{y}\pa_{\kappa}X^1=0, \nonumber \\
    & 2\gamma\frac{x}{y}X^1+\gamma(1-\frac{x^2}{y^2})X^2+2\delta q
        X^5+2(\gamma\frac{x^2+y^2}{y}+\delta
      q^2)\pa_pX^4\\
    & +2(\gamma\frac{x}{y}-\delta pq)\pa_pX^4+2\delta q
      \pa_pX^6=0, \nonumber \\ 
    &\frac{\gamma}{y}X^1-\gamma\frac{x}{y^2}X^2-\delta(qX^4+pX^5)+(\gamma\frac{x}{y}-\delta
      pq)\pa_pX^4+(\frac{\gamma}{y}+\delta p^2)\pa_pX^5 -\delta
      p\pa_pX^6 +\nonumber \\
   ~~~~~ & (\gamma\frac{x^2+y^2}{y}+\delta q^2)\pa_q X^4 +
      (\gamma\frac{x}{y}-\delta pq)\pa_qX^5+\delta
           q\pa_qX^6=0,\nonumber \\ 
    & \delta q \pa_p X^4-\delta p \pa_p X^5 +\delta \pa_p
      X^6+(\gamma\frac{x^2+y^2}{y}+\delta
      q^2)\pa_{\kappa}X^4+(\gamma\frac{x}{y}-\delta
      pq)\pa_{\kappa}X^5+\delta q \pa_{\kappa} X^6=0, \nonumber \\ 
    & -\frac{\gamma}{y^2}X^2+2\delta pX^4+2(\gamma\frac{x}{y}-\delta
      pq)\pa_{q}X^4+2(\frac{\gamma}{y}+\delta p^2)\pa_qX^4-2\delta p
      \pa_q X^6=0, \nonumber  \\ 
    & -\delta X^4+\delta q \pa_q X^4-\delta p
      \pa_qX^5+\delta\pa_qX^6+(\gamma\frac{x}{y}-\delta
      p\kappa)\pa_{\kappa}X^4\\
    & +(\frac{\gamma}{y}+\delta p^2)\pa_{\kappa}X^5-\delta p\pa_{\kappa}X^6=0,\nonumber \\ 
    & q\pa_{\kappa}X^6-p\pa_{\kappa}X^5+\pa_{\kappa}X^6=0.\nonumber 
    \end{align*}
 \end{Proposition}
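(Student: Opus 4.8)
The statement differs from \cite[Theorem~5.7]{SB19b} only by the inclusion of the $\lambda_6$-term, so the plan is to reproduce that computation and record the extra entries. First I would write $g\in G^J_1(\R)$ in the matrix form \eqref{SP2R}, with the $\SL$-block $M$ parametrized by the Iwasawa coordinates $(x,y,\theta)$ through \eqref{SCXYT}--\eqref{SCINV} and the Heisenberg part by $(p,q,\kappa)$, compute the Maurer--Cartan form $g^{-1}\dd g$ using $g^{-1}$ from \eqref{INVV}, and read off the left-invariant one-forms $\lambda^F,\dots,\lambda^R$, i.e. the coefficients of the basis $F,G,H,P,Q,R$ of $\got{g}^J_1(\R)$. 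Substituting these into the rescaled forms $\lambda_1,\lambda_2,\lambda_3$ of \eqref{1l2l3l} and into $\lambda_4=\sqrt{\gamma}\lambda^P$, $\lambda_5=\sqrt{\gamma}\lambda^Q$, $\lambda_6=\sqrt{\delta}\lambda^R$, and forming $\dd s^2_{G^J_1(\R)}=\sum_{i=1}^{6}\lambda_i^2$, one finds that $\lambda_1^2+\lambda_2^2+\lambda_3^2$ yields the $\SL$-part $\alpha\frac{\dd x^2+\dd y^2}{y^2}+\beta(\frac{\dd x}{y}+2\dd\theta)^2$, that $\lambda_4^2+\lambda_5^2$ gives the $\frac{\gamma}{y}[\dd q^2+(x^2+y^2)\dd p^2+2x\dd p\,\dd q]$ term, and that $\lambda_6^2$ gives $\delta(\dd\kappa-p\,\dd q+q\,\dd p)^2$; this is \eqref{MTRTOT}. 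Left-invariance is automatic, each $\lambda_i$ being left-invariant by construction.

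The second assertion is essentially tautological: $L^1,L^2,L^3$ from \eqref{1L2L3L} together with $L^4=\frac{1}{\sqrt{\gamma}}L^P$, $L^5=\frac{1}{\sqrt{\gamma}}L^Q$, $L^6=\frac{1}{\sqrt{\delta}}L^R$ are defined as the frame dual to $\lambda_1,\dots,\lambda_6$, so $\langle\lambda_i\,|\,L^j\rangle=\delta_{ij}$, and since $\dd s^2_{G^J_1(\R)}=\sum\lambda_i^2$ this dual frame is $\dd s^2_{G^J_1(\R)}$-orthonormal. The commutation relations \eqref{FUFUFU} then follow from the structure constants of $\got{g}^J_1(\R)$ in this basis, obtained from \eqref{PQT1}, \eqref{FGHCOM}, \eqref{MORCOM} after the rescalings --- equivalently from the Maurer--Cartan equations $\dd\lambda_i=-\frac{1}{2} c^i_{jk}\,\lambda_j\wedge\lambda_k$.

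For the Killing equations the conceptual input is that the metric \eqref{MTRTOT} is left-invariant and that the vector fields \eqref{NEWPQR4} are the fundamental vector fields of the left action \eqref{COMPL} of $G^J_1(\R)$ on itself, computed in Proposition~\ref{4pr}\,g). Since the fundamental vector field of a group acting by isometries is a Killing field (\cite[Remark~A.4]{SB19b}, \cite[Proposition~2.2, p~139]{koda}), each of $F^*,G^*,H^*,P^*,Q^*,R^*$, and hence every real linear combination $X=X^1\pa_x+X^2\pa_y+X^3\pa_\theta+X^4\pa_p+X^5\pa_q+X^6\pa_\kappa\in\got{g}^J_1(\R)$, satisfies $L_X\,\dd s^2_{G^J_1(\R)}=0$, which in the ordered coordinates $(x,y,\theta,p,q,\kappa)$ is the system \eqref{LG1}. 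It then remains only to read the $6\times6$ matrix $(g_{\lambda\chi})$ off \eqref{MTRTOT}, substitute the components of a general $X$ into \eqref{LG1} for each of the $21$ index pairs $(\lambda,\chi)$, and record the resulting relations as displayed. The hard part is not conceptual --- homogeneity guarantees solvability --- but the sheer volume of the calculation: the Maurer--Cartan form in six group coordinates and the $21$-equation Killing system, where the only real risk is mismatching a sign or ordering convention ($g^{-1}\dd g$ versus $\dd g\,g^{-1}$, left- versus right-invariance, the order of the $S$-coordinates), all of which are already fixed in Sections~\ref{HS1}--\ref{JG1} and in \cite{SB19b}.
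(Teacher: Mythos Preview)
Your proposal is correct and follows exactly the route the paper sets up: the metric \eqref{MTRTOT} is obtained from the Maurer--Cartan form $g^{-1}\dd g$ computed in the $S$-coordinates (as described at the start of Section~\ref{INV}), the duality $\langle\lambda_i\,|\,L^j\rangle=\delta_{ij}$ and the relations \eqref{FUFUFU} are recalled from \cite{SB19b}, and the Killing equations are obtained by inserting the metric components of \eqref{MTRTOT} and the fundamental vector fields of Proposition~\ref{4pr}\,g) into \eqref{LG1}. The paper gives no separate proof for this proposition beyond the preparatory material in Section~\ref{INV} and the reference to \cite[Theorem~5.7]{SB19b}; your outline is precisely that preparation made explicit. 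One small inaccuracy: the metric \eqref{MTRTOT} with all six $\lambda_i^2$ already appears in \cite[Theorem~5.7]{SB19b}; what is new here (and what the paper flags as new in the Introduction) is the explicit list of Killing equations and the verification that the fields \eqref{NEWPQR4} solve them, not the inclusion of the $\lambda_6$-term.
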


\section{Natural reductivity and geodesic vectors  on  $\tilde{\mc{X}}^J_1$}\label{NRGO}
We briefly recall the notions of natural reductivity and geodesic
vectors. More references and details  are given in \cite[Appendix A]{SB19b}.

In accord with  Nomizu \cite{nomizu}, 
a homogeneous space $M=G/H$ is {\em reductive} if the Lie algebra \g~
of $G$ may be decomposed into a vector space direct sum of the Lie
algebra \h ~ of $H$ and an $\Ad (H)$-invariant subspace \m , that is
\begin{subequations} 
\begin{equation}\label{sum1}
\g = \h \oplus \m, ~ \h\cap\m =\emptyset,
\end{equation}
\begin{equation}\label{sum2}
 \Ad(H)\m \subset \m. 
\end{equation}
{\text{Condition (\ref{sum2}) implies}}
\begin{equation}\label{sum3}
[\h ,\m ]\subset \m, 
\end{equation}
\end{subequations}
and  conversely, if $H$ is connected, then (\ref{sum3}) implies
 (\ref{sum2}). 

If the Lie algebra $\got{g}$ and its subalgebra $\got{h}$ associated with
the homogenous   manifold $M=G/H$ satisfy \eqref{sum1}, then a
necessary and sufficient condition for $M$ to be a  {\it locally symmetric
  space} is
\begin{equation}\label{CUP} [\got{h}, \got{h}]\subset \h,\quad [\h,\m]\subset \m,\quad
  [\m,\m]\subset \h.
\end{equation}

{\it If $M$ is a complete, simply connected Riemannian locally symmetric
space, then $M$ is a Riemannian globally symmetric space}
\cite[Theorem 5.6 p 232]{helg}.

We recall   \cite[Theorem 5.4]{as},  \cite[Appendix A.4]{SB19b}, \cite[Proposition 1 p 5]{btv},
\cite[Ch X, \S 3]{kn}, \cite[Theorem 6.2, 
p 58]{tv}.

{\it Let $(M,g)$ be a homogeneous Riemannian manifold.  Then $(M,g)$ is
a naturally reductive Riemannian homogenous space if and only if there
exists a connected Lie subgroup $G$ of $I(M)$ acting transitively and
effectively  on $M$ and
a reductive decomposition \eqref{sum1}   such that one of the
following equivalent statements holds}{\rm :}
\begin{enumerate}\itemsep=0pt
\item[$(i)$] {\it the following relation is verified}
\begin{equation}\label{X2X3}g([X_1,X_3]_{\m},X_2)+g(X_1,[X_3,X_2]_{\m})=0,~\forall
X_1,X_2,X_3\in\m; \end{equation}
 \item[$(ii)$] $(*)$  {\it every geodesic in $M$ is the orbit of a one-parameter subgroup of $I(M)$ generated by some} $X\in\m$.
\end{enumerate} 
 
The natural reductivity is a special case of spaces with a more
general property than~$(*)$, see~\cite[Appendix A.7]{SB19b}, \cite{kwv}:\\
$(**)$ {\it Each geodesic of} $(M =G/H,g)$ {\it is an orbit of a one parameter group of isometries} $\exp tZ$, $Z\in\got{g}$.

A vector $X \in\got{g}\setminus \{0\}$ is called a {\it geodesic
  vector} 
if the curve   $\gamma(t)=(\exp tX)(p)$ is a geodesic,
cf. \cite{kwv}. 
Riemannian homogeneous spaces with property (**) are called {\it
  g. o. spaces}. (g. o. =  geodesics are orbits). All naturally
reductive spaces are g. o. spaces.

Kowalski and
Vanhacke \cite{kwv} have proved that the condition to have a geodesic vector is expressed in 
the

{\it Geodesic Lemma:   On homogeneous Riemannian manifolds} $M=G/H$ {\it a vector} $X
\in\got{g}\setminus \{0\}$
{\it is geodesic if and only if}
\begin{equation}\label{BCOND}
B([X,Y]_{\m}, X_{\m})=0, \forall Y\in \m.
\end{equation}

It is known \cite{kwv} that:
{\it Every simply connected Riemannian g. o.~space $(G/H,g)$ of dimension $n\le 5$ is a naturally reductive Riemannian manifold.}

The next theorem  is a completion of \cite[Proposition 5.8]{SB19b}
\begin{Theorem}\label{PRLST}
a) The Siegel--Jacobi upper half-plane, 
realized as homogenous Riemannian manifold
$(\mc{X}^J_1=\frac{G^J_1(\R)}{\text{SO}(2)\times\R}, g_{\mc{X}^J_1})$, 
is a reductive, non-symmetric manifold, not  naturally reductive with
respect to the balanced metric \eqref{METRS2}.

The Siegel--Jacobi upper half-plane  $\mc{X}^J_1$ is not a 
g. o. space  with respect to the balanced metric.

b) If
 \begin{equation}\label{XLP}
\got{g}^J_1(\R)\ni X=aL^1+bL^2+cL^3+dL^4+eL^5+fL^6,
\end{equation}
then the  geodesic vectors of the homogeneous manifold $\mc{X}^J_1$ have
one of the following expressions given in {\emph{Table 1}}

\begin{center}
\begin{tabular}{||c|c|c|c|c|c|c||}
 \hline
\multicolumn{7}{|c|}{\small {\emph{Table 1: Components of the geodesic  vectors}}
  \eqref{XLP} {\emph{on}} $\mc{X}^J_1$}\\
  \hline Nr. cr. & {a} & {b} &
{c} & {d} & {e} & {f} \\
\hline {\rm 1} & $0$ & $0$ & $c$  &  $0$ & $0$ & $f$ \\
\hline {\rm 2}& $a$ & $b$ & $0$ &  $0$ &  $0$ & $f$ \\
\hline {\rm 3}& $rc$ & $0$  & $c$  & $\pm rc$& $0$ & $f$  \\
\hline {\rm 4}& $a$ & $0$  & $-a$  & $0$ &$\epsilon\sqrt{r}a$ & $f$ \\
\hline {\rm 5}& $\epsilon_1\epsilon_2 \frac{1-r}{\sqrt{r}}e$ & $\epsilon_1 e$
                                 & $-\frac{\epsilon_1\epsilon_2}{\sqrt{r}}e$
    & $\epsilon_2\sqrt{r}e$ & $e$ & $f$\\
\hline
\end{tabular}
\end{center}
where $r=\sqrt{\frac{\alpha}{\beta}}$,
$\epsilon_1^2=\epsilon_2^2=\epsilon^2=1$.

c) The extended Siegel--Jacobi upper half-plane, realized as homogenous 
Riemannian  manifold
$(\tilde{\mathcal{X}}^J_1=\frac{G^J_1(\mathbb{R})}{\text{SO}(2)},g_{\tilde{\mc{X}}^J_1})$,
is a
reductive, non-symmetric manifold, non-naturally reductive with
respect to the metric \eqref{linvG}.

The Siegel Jacobi upper half-plane  $\tilde{\mc{X}}^J_1$ is not a 
g. o. space  with respect to the  invariant  metric \eqref{linvG}.

d) If we take $X\in\got{g}^J_1$ as in \eqref{XLP}, then the geodesic
vectors on the extended Siegel--Jacobi manifold  $\tilde{\mc{X}}^J_1$ are given
in {\emph{Table 2}} for
\[        
r >R_3=
  [\frac{1}{2}+\frac{1}{6}(\frac{31}{3})^{\frac{1}{2}}]^{\frac{1}{3}}+[\frac{1}{2}-\frac{1}{6}(\frac{31}{3})^{\frac{1}{2}}]^{\frac{1}{3}}
     \approx 0.6823\dots.\]
    
\begin{center}
\begin{tabular}{||c|c|c|c|c|c|c||}
 \hline
\multicolumn{7}{|c|}{\small {\emph{Table 2: Components of the geodesic
  vectors}}
  \eqref{XLP}  {\emph{on}} $\tilde{\mc{X}}^J_1$}\\
\hline Nr. cr. & {a} & {b} & {c} & {d} & {e} & {f} \\
\hline {\rm 1}&
          $\epsilon_1\epsilon_2(1-r)\sqrt{\frac{1+r^2}{rF_2}}e$&
                                                                        $\epsilon_2\sqrt{\frac{F_3}{r(r^2+1)}}e$ & $-\epsilon_1\epsilon_2r\sqrt{\frac{r}{(r^2+1)F_2}}e$ & $\epsilon_1\sqrt{\frac{F_3}{F_2}}e$  &  $e\not= 0$ &   $f$ \\
\hline {\rm 2} & $\epsilon\sqrt{\frac{r}{r^2+2}}e$ & $0 $ & $ -\epsilon\sqrt{\frac{r}{r^2+2}}e$ &  $d= 0$ &  $e\not=0$ & $f$ \\
\hline {\rm 3} &  $rc$ & $0$  & $c\not= 0$ & $\epsilon\sqrt{2+r^2}c$ & $0$ & $f$  \\
  \hline {\rm 4} &  $0$ & $0$  & $c$ & $0$ & $0$ & $f$  \\
  \hline {\rm 5} &  $0 $ & $b$  & $0$ & $0$ & $0$ & $f$  \\
  \hline {\rm 6} &  $a $ & $0$  & $0$ & $0$ & $0$ & $f$  \\
  \hline
\end{tabular}
\end{center}
The polynomials    $F_2,F_3$ 
are defined in \eqref{star2}
\begin{equation}\label{star2}
  F_3(r)=r^3+r-1,~F_2(r)=r^2-r+1,~r\in\R.
  \end{equation}
\end{Theorem}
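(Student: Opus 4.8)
The plan is to establish each claim of the theorem by reducing it to explicit algebra in the orthonormal coframe $\lambda_1,\dots,\lambda_6$ (respectively $\lambda_1,\dots,\lambda_5$ for $\tilde{\mc{X}}^J_1$), using the commutation relations \eqref{FUFUFU} of the dual vector fields $L^1,\dots,L^6$. For parts a) and c), the non-symmetry is immediate: the reductive complement $\m$ for $\mc{X}^J_1$ (respectively $\tilde{\mc{X}}^J_1$) is the span of all $L^i$ except $L^3$ (respectively except $L^3$ alone, keeping $L^6$), and inspecting \eqref{FUFUFU} one sees that $[\m,\m]$ is not contained in $\h=\langle L^3\rangle$ — e.g. $[L^4,L^5]$ has a nonzero $L^6$-component — so condition \eqref{CUP} fails. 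To prove the space is not naturally reductive one must show that the bilinear condition \eqref{X2X3} cannot hold for \emph{any} reductive decomposition; the standard route (following \cite[Proposition 5.8]{SB19b}) is to observe that since $\dim \le 5$, ``not naturally reductive'' is equivalent to ``not a g.\,o.\ space'', so it suffices to exhibit — via the Geodesic Lemma \eqref{BCOND} — that the set of geodesic vectors does not span $\m$, equivalently that the system \eqref{BCOND} has only the lower-dimensional solution set described in Tables 1 and 2.

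For parts b) and d), the heart of the matter is solving the Geodesic Lemma equations. Writing $X=aL^1+bL^2+cL^3+dL^4+eL^5+fL^6$ and $Y=Y_1L^1+\dots$, the condition $B([X,Y]_{\m},X_{\m})=0$ for all $Y\in\m$ becomes, after expanding $[X,Y]$ with \eqref{FUFUFU} and using orthonormality of $B=\dd s^2$, a system of bilinear equations in $a,b,c,d,e,f$ (one equation per basis direction of $\m$), with coefficients rational in the ratio $r=\sqrt{\alpha/\beta}$. The plan is: (1) write out this system explicitly; (2) treat it as a polynomial system and perform a case analysis according to which of $a,b,c,d,e$ vanish; (3) in each branch solve for the surviving parameters, which produces the rows of Table 1. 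For $\tilde{\mc{X}}^J_1$ the system is larger (one more direction, $L^6$, enters $\m$ and the bracket relations $[L^1,L^6]=[L^2,L^6]=\dots=0$ mostly decouple it, while $[L^4,L^5]$ feeds into it), and the case analysis yields the six rows of Table 2 together with the cubic $F_3(r)=r^3+r-1$ and quadratic $F_2(r)=r^2-r+1$ governing when certain denominators are nonzero; the threshold $R_3$ is precisely the real root of $F_3$, obtained by Cardano's formula, and one checks $F_2>0$ for all real $r$ so only $F_3$ constrains the parameter range.

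The main obstacle I expect is the bookkeeping in step (2): the bilinear system has many branches and one must argue carefully that the enumerated cases are exhaustive and non-redundant — in particular that no geodesic vector has a nonzero $L^3$-component paired in a way that escapes the listed families, and that the normalization factors $\epsilon_1,\epsilon_2,\epsilon$ exhaust the sign ambiguities coming from extracting square roots. A secondary subtlety is verifying left-invariance of the metrics \eqref{linvG} and \eqref{MTRTOT}: this follows because each $\lambda_i$ is built from the left-invariant Maurer--Cartan forms $\lambda^F,\dots,\lambda^R$ computed in Section~\ref{INV}, so $\sum \lambda_i^2$ is automatically left-invariant; alternatively it is confirmed by checking that the fundamental vector fields of Proposition~\ref{4pr} f), g) satisfy the Killing equations \eqref{LG1}, which is exactly the content of Propositions~\ref{Pr5} and~\ref{BIGTH}. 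Once the geodesic-vector tables are in hand, the non-g.\,o.\ (hence non-naturally-reductive) conclusion of a) and c) is read off by noting that the geodesic vectors in Tables 1 and 2 do not fill out a $5$-dimensional (respectively $5$-dimensional) subspace — e.g. one cannot obtain a generic direction in $\m$ — so the space fails $(**)$ and therefore fails $(*)$.
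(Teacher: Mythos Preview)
Your overall strategy for parts b) and d) matches the paper's: write $X$ in the orthonormal frame, expand $[X,Y]$ via \eqref{FUFUFU}, impose \eqref{BCOND}, and solve the resulting bilinear system by case analysis. The paper carries this out explicitly, arriving at the four-equation system \eqref{KLJH} for $\mc{X}^J_1$ and the five-equation system \eqref{KLJH1} for $\tilde{\mc{X}}^J_1$; Tables~1 and~2 come from branching on which of $d,e$ vanish, solving \eqref{KL3}--\eqref{KL4} (respectively \eqref{JHH44}--\eqref{JHH55}) for $a,c$ in terms of $b,d,e$, and back-substituting into the remaining equations to obtain the compatibility relations $d^2=re^2$, respectively $d^2/e^2=F_3(r)/F_2(r)$.

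Your proposal diverges from the paper in part c) in two respects. First, the reductive decomposition: you take $\got{h}=\langle L^3\rangle$ and $\got{m}=\langle L^1,L^2,L^4,L^5,L^6\rangle$, which is the natural choice for the quotient by $\mathrm{SO}(2)$; the paper instead works with $\got{h}=\langle R\rangle$ (equivalently $\langle L^6\rangle$) and $\got{m}=\langle L^1,\dots,L^5\rangle$, and it is \emph{this} $\got{m}$ that feeds into both the non-symmetry check and the geodesic-vector system \eqref{KLJH1}. With your $\got{m}$ the $L^3$-component of $[X,Y]$ is projected out while the $L^6$-component (coming from $[L^4,L^5]=\tfrac{2\sqrt{\delta}}{\gamma}L^6$) now enters, so you will derive a different system and will not reproduce Table~2 as stated. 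Second, for non-natural-reductivity the paper does \emph{not} argue via the g.o.\ equivalence: it writes condition \eqref{X2X3} as a linear system in the components of $X_1$, assembles the $5\times5$ coefficient matrix \eqref{big5}, and computes its determinant. Your route through ``not g.o.\ $\Rightarrow$ not naturally reductive'' (invoking the Kowalski--Vanhecke result for $\dim\le5$) is a legitimate alternative and arguably more robust, since natural reductivity is an existential statement over all admissible reductive complements; but it is a genuinely different argument from the one the paper presents.
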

\begin{proof}
  a) This part has been proved in \cite{SB19b}.

  b) This part was  also  proved in \cite{SB19b}. Here we give
  more details which are used in the next parts of the theorem.

To find the geodesic vectors on the Siegel--Jacobi upper half-plane
$\mc{X}^J_1$, we look for the solution \eqref{XLP}  that verifies
the condition  \eqref{BCOND}.

Taking 
\[
\m\ni Y= a_1L^1+b_1L^2+d_1L^4+e_1L^5,
\]
the condition \eqref{BCOND}  becomes
\[
  a_1(\frac{bc}{\sqrt{\beta}}\!+\!\frac{ed}{\sqrt{\alpha}})\!+\!\frac{b_1}{2}[-\frac{ac}{\sqrt{\beta}}\!+\!\frac{1}{\sqrt{\alpha}}(d^2\!-\!e^2)]
\!-\!\frac{d_1}{2\sqrt{\alpha}}(bd\!+\!ec\!+\!ae)
\!+\!\frac{e_1}{2}[\frac{cd}{\sqrt{\beta}}\!+\!\frac{1}{\sqrt{\alpha}}(be\!-\!ad)]\!=\!0,
\]
and must be satisfied for every values of $a_1,b_1,d_1,e_1$, i.e.  the
coefficients of the geodesic vector \eqref{XLP} are solutions to the system of
algebraic equations
\begin{subequations}\label{KLJH}
\begin{eqnarray}
& &rbc+de=0,\label{KL1}\\
& &-rac+ d^2-e^2=0,\label{KL2}\\
& &bd+e(a+c)=0,\label{KL3}\\
& &rcd+be-ad=0.\label{KL4}
\end{eqnarray}
\end{subequations}
Now suppose that we have $de\not=0$. We write \eqref{KL3} and
\eqref{KL4}
as
\begin{subequations}\label{24KL}
  \begin{eqnarray}
    & & a + c=-\frac{bd}{e},\\
    & & -a+ rc= -\frac{be}{d}.
    \end{eqnarray}
  \end{subequations}
 We find  from \eqref{24KL}
  \begin{equation}\label{ABab}
    a=\frac{b}{de}\frac{e^2-rd^2}{1+r},\quad
  c=-\frac{b}{ed}\frac{d^2+e^2}{1+r}. \end{equation}
  Introducing $a$ and $c$ from \eqref{ABab} into \eqref{KL1} and
  \eqref{KL2}, we find the compatibility condition
  \begin{equation}\label{compeb}
    d^2=re^2.
  \end{equation}
  If $ed\not= 0$, we get the first  line in Table 1. The other
  situations are contained in the next lines  of  Table 1.

c) We consider 
\[       
  \got{m}=<F,G,H,P,Q>,~\got{h}= <R>,
\]      
and  with  the commutation relations  \eqref{PQT1}, \eqref{FGHCOM}, 
\eqref{MORCOM}, we get  $[\got{h},\got{m}]\subset \got{m}$, but
$[\got{m},\got{m}] \nsubseteq \got{h}$.
This contradicts relation \eqref{CUP} satisfied by a symmetric manifold.

In order to verify the natural reductivity of the extended Siegel
upper half-plane,  we have to check relation \eqref{X2X3}. We take
\begin{equation}\label{X11X31}
  X_i=a_iL^1+b_iL^2+c_iL^3+d_iL^4+e_iL^5\subset\m, ~i=1,2,3.
\end{equation}

Let us introduce the following  
notation $$\gamma:=\frac{1}{\sqrt{\beta}}-\frac{\sqrt{\beta}}{\alpha},\quad
\zeta:=\frac{\sqrt{\beta}}{\alpha}-\frac{1}{2\sqrt{\beta}}.$$
Then \eqref{X2X3} reads
\begin{equation}\label{PASTEM}
  \left\{
  \begin{array}{l}
    \gamma(c_1b_2-b_1c_2)=0,\\
     \zeta(c_1a_2-a_1c_2)=0, \\
    \frac{3}{\sqrt{\beta}}(b_1a_2-a_1b_2)+(\frac{1}{\sqrt{\alpha}}+\frac{1}{\sqrt{\beta}})(d_1e_2-e_1d_2)=0,\\
  -a_1e_2  -b_1d_2 -c_1e_2 +d_1b_2+e_1(a_2+c_2)=0,\\
      \frac{d_1a_2-a_1d_2+b_1e_2-e_1b_2}{\sqrt{\alpha}}+\frac{c_1d_2-d_1c_2}{\sqrt{\beta}}=0
   \end{array}
   \right. .
 \end{equation}
 We write the system of algebraic equations \eqref{PASTEM} as
 \begin{equation} \sum _{j=1}^5A_{ij}x_j=0,\quad i=1,\dots,5,\end{equation}
 where $x:=(x_1,\dots,x_5)=(a_1,\dots,e_1)$.

 Now we calculate matrix $A$ from \eqref{PASTEM}
\begin{equation}\label{big5}
  A:=\{A_{ij}\}_{i,j=1,\dots,5}=
  \left(\begin{array}{ccccc}
  0 & \gamma c_2&
            - \gamma b_2& 0 &0\\
          -\zeta c_2& 0 & \zeta a_2& 0 & 0 \\
          -\frac{3b_2}{\sqrt{\beta}}&\frac{3a_2}{\sqrt{\beta}}
 & 0& (\frac{1}{\sqrt{\alpha}}+\frac{1}{\sqrt{\beta}})e_2
 & -(\frac{1}{\sqrt{\alpha}}+\frac{1}{\sqrt{\beta}})d_2\\
          -e_2& -d_2& -e_2&b_2& a_2+c_2\\
          -\frac{d_2}{\sqrt{\alpha}}& \frac{e_2}{\sqrt{\alpha}} & \frac{d_2}{\sqrt{\beta}}& \frac{a_2}{\sqrt{\alpha}}-\frac{c_2}{\sqrt{\beta}}& -\frac{b_2}{\sqrt{\alpha}}
          \end{array}\right)
      \end{equation}
 and, computing its determinant, we find  $\det{A}=0$ for any $X_2\in\got{m}$. This means that we can find $X_1\in\got{m}$ such that \eqref{X2X3} be satisfied for any $X_2,X_3\in\got{m}$, and thus $\tilde{\mc{X}}^J_1$ is not a naturally reductive space with respect to the metric  \eqref{linvG}.

  d) The condition for   a vector $X$ 
 \eqref{XLP} to be a geodesic  vector  on the
  extended Siegel--Jacobi upper half-plane   is to verify \eqref{BCOND}. If we
   take $$\m\ni Y=a_1L^1+b_1L^2+c_1L^3+d_1L^4+e_1L^5,$$ with the
   commutation relations \eqref{FUFUFU}, we find
     \begin{multline*}
      [X,Y]= -\frac{1}{2\sqrt{\beta}}(cb_1-c_1b)L^1+\frac{1}{\sqrt{\beta}}(ca_1-c_1a)L^2-\frac{\sqrt{\beta}}{\alpha}(ab_1-a_1b)L^3\\
         +[-\frac{1}{2\sqrt{\alpha}}(ae_1-a_1e)-\frac{1}{2\sqrt{\alpha}}(bd_1-db_1)+\frac{1}{2\sqrt{\beta}}(ce_1-c_1e)]L^4\\
         +[-\frac{1}{2\sqrt{\alpha}}(ad_1-da_1)+\frac{1}{2\sqrt{\alpha}}(be_1-b_1e)-\frac{1}{2\sqrt{\alpha}}(cd_1-c_1d)]L^5\\
         + 2(de_1-ed_1)\frac{\sqrt{\delta}}{\gamma}L^6 .{\qquad\qquad \qquad\qquad\qquad \qquad\qquad\qquad \qquad\qquad\qquad}
       \end{multline*}
       Condition \eqref{BCOND}
     requires   the components of the geodesic vector $X$ to verify
     the system of algebraic   equations:
     \begin{subequations}\label{KLJH1}
\begin{eqnarray}
& & (r+\frac{1}{r})bc+de=0,\label{JHH11}\\
& & -(r+\frac{2}{r})ac+ d^2-e^2=0,\label{JHH22}\\
& &    -rab+(1-r)de=0,\label{JHH33}\\
& &  bd+e(a+c)=0,\label{JHH44}\\
& & rcd+be-ad=0.\label{JHH55}
\end{eqnarray}
\end{subequations}
      From \eqref{JHH44} and \eqref{JHH55} we get for $a$ and $c$ the
     expressions given in \eqref{ABab}, which we introduce in \eqref{JHH11} and obtain
     \begin{equation}\label{mat1}
     \frac{b^2}{d^2e^2}=\frac{r(r+1)}{r^2+1}\frac{1}{d^2+e^2}.
     \end{equation} 
     We also introduce in \eqref{JHH33} the expressions for $a$
       and $c$ given in  \eqref{ABab} and we get
     \begin{equation}\label{mat2}
       \frac{b^2}{d^2e^2}=\frac{1-r^2}{r}\frac{1}{e^2-rd^2}.
     \end{equation}

     The compatibility of equations \eqref{mat1} and \eqref{mat2}
     imposes the following restriction:
     \begin{equation}\label{star1}
       \frac{d^2}{\e^2}= \frac{F_3(r)}{F_2(r)}.
      \end{equation}
     The real root $R_3$ of equation $F_3(r)=0 $ is obtained with
     Cardano's formula as
     \[ 
       R_3=[\frac{1}{2}+\frac{1}{6}(\frac{31}{3})^{\frac{1}{2}}]^{\frac{1}{3}}+[\frac{1}{2}-\frac{1}{6}(\frac{31}{3})^{\frac{1}{2}}]^{\frac{1}{3}}
     \approx 0.6823\dots.\]
    
   Introducing \eqref{ABab} in \eqref{JHH22}, we come back  to 
   condition \eqref{star1}.
    
 \end{proof}

 In conclusion,
 \begin{Comment}\label{CM2}
   In this paper we have investigated some geometric properties of the
   extended  Siegel-Jacobi upper half-plane introduced
   in \cite{SB19b}. If the invariant metric on the four dimensional
   manifold $\mc{X}^J_1$ can be obtained with the CS methods, the invariant metric
   on the five dimensional manifold $\tilde{\mc{X}}^J_1$ can be
   obtained only  with Cartan's moving frame method.  Both manifolds  $\mc{X}^J_1$ and
   $\tilde{\mc{X}}^J_1$ are reductive, non-symmetric, non-naturally reductive
   manifolds and consequently are not g. o. spaces. 
   \end{Comment}

\subsection*{Acknowledgements}

This research  was conducted in  the  framework of the 
ANCS project  program   PN 19 06
01 01/2019.


\end{document}